\documentclass{article}
\usepackage{amssymb}
\usepackage{amsmath}
\usepackage{amsfonts}

\setcounter{MaxMatrixCols}{10}

\newtheorem{theorem}{Theorem}[section]

\newtheorem{corollary}{Corollary}[section]

\newtheorem{definition}{Definition}[section]
\newtheorem{example}{Example}[section]

\newtheorem{lemma}{Lemma}[section]

\newtheorem{proposition}{Proposition}[section]
\newtheorem{remark}{Remark}[section]

\newenvironment{proof}[1][Proof]{\noindent\textbf{#1.} }{\ \rule{0.5em}{0.5em}}
\input{tcilatex}
\begin{document}

\date{}
\title{FUZZY SEQUENTIAL TOPOLOGICAL SPACES(FSTS) }
\author{ M. Singha$^{\alpha }$, N. Tamang$^{\beta }$ and S. De Sarkar$%
^{\gamma }$ \\
Department of Mathematics,\\
University of North Bengal,\\
Rajarammohunpur - 734013\\
West Bengal, India.\\
$^{\alpha }$manoranjan\_singha@rediffmail.com\\
$^{\beta }$nita\_anee@yahoo.in\\
$^{\gamma }$suparnadesarkar@yahoo.co.in}
\maketitle

\begin{abstract}
In this paper we define fuzzy sequential topology(FST) on $X$ which is a
subcollection of ($I^{X}$)$^{%
\mathbb{N}
}$ satisfying the conditions given in the definition. In this setting many
pleasant properties of a countable number of fuzzy topologies on $X$
associated as components of a FST can be known.\newline

\textbf{Keywords:} Fuzzy sequential topology, component fuzzy topology,
fuzzy sequential set, fuzzy sequential point, reduced fuzzy sequential
point, quasi coincidence and weakly quasi coincidence, Q-neighbourhood,
fuzzy derived sequential set.

\textbf{AMS Subject Classification 2000: }54A40, 03E72.
\end{abstract}

\section{Introduction:}

Let $X$ be a non empty set and $I=[0$, $1]$ be the closed unit interval in
the set $%
\mathbb{R}
$ of real numbers. Let $A_{f}(s)=\{A_{f}^{n}\}_{n}$ and $B_{f}(s)=%
\{B_{f}^{n}\}_{n}$ be sequences of fuzzy sets in $X$ called fuzzy sequential
sets in $X$ and we define\newline
$i)$ $A_{f}(s)\ \vee B_{f}(s)=\{A_{f}^{n}\vee B_{f}^{n}\}_{n}$(union), 
\newline
$ii)$ $A_{f}(s)\wedge B_{f}(s)=\{A_{f}^{n}\wedge B_{f}^{n}\}_{n}$%
(intersection), \newline
$iii)$ $A_{f}(s)\leq B_{f}(s)~$if and only if~$A_{f}^{n}\leq B_{f}^{n~}~$for
all~$n\in 
\mathbb{N}
$, $%
\mathbb{N}
$ being the set of positive integers, \newline
$iv)$ $A_{f}(s)\leq _{w}B_{f}(s)$ if and only if there exists $n\in 
\mathbb{N}
~$such that~$A_{f}^{n}\leq B_{f}^{n}$, \newline
$v)$ $A_{f}(s)=B_{f}(s)~$if and only if~$A_{f}^{n}=B_{f}^{n}~$for all~$n\in 
\mathbb{N}
$, \newline
$vi)$ $A_{f}(s)(x)=\{A_{f}^{n}(x)\}_{n}$, $x\in X$, \newline
$vii)$ $A_{f}(s)(x)\geq _{M}r$ if and only if $A_{f}^{n}(x)\geq r_{n}$ for
all $n\in M$, where $r=\{r_{n}\}_{n}$ is a sequence in $I$. In particular if 
$M=%
\mathbb{N}
$, where $%
\mathbb{N}
$ is the set of positive integers, we write $A_{f}(s)(x)\geq r$, \newline
$viii)$ $X_{f}^{l}(s)=\{X_{f}^{n}\}_{n}~$where~$l\in I~$and~$X_{f}^{n}(x)=l$%
, for~all~$x\in X$, $n\in 
\mathbb{N}
$, \newline
$ix)$ $A_{f}^{c}(s)=\{1-A_{f}^{n}\}_{n}=\{(A_{f}^{n})^{c}\}_{n}$, called
complement of $A_{f}(s)$,\newline
$x)$ a fuzzy sequential set $P_{f}(s)=\{p_{f}^{n}\}_{n}$ is called a fuzzy
sequential point if there exists $x\in X$ and a non zero sequence $%
r=\{r_{n}\}_{n}$\ in $I$ such that

\begin{eqnarray*}
p_{f}^{n}(t) &=&r_{n}\text{, if }t=x\text{, } \\
&=&0\text{, if }t\in X-\{x\}\text{, for all }n\in 
\mathbb{N}
\text{.}
\end{eqnarray*}%
If $M$ be the collection of all $n\in 
\mathbb{N}
$ such that $r_{n}\neq 0$, then we can write the above expression as%
\begin{eqnarray*}
p_{f}^{n}(x) &=&r_{n}\text{, whenever }n\in M\text{, } \\
&=&0\text{,}\ \text{whenever }n\in 
\mathbb{N}
-M\text{.}
\end{eqnarray*}%
The point $x$ is called the support, $M$\ is called base and $r$ is called
the sequential grade of membership of $x$ in the fuzzy sequential point $%
P_{f}(s)$ and we write $P_{f}(s)=(p_{fx}^{M}$, $r)$. If further $M=\{n\}$, $%
n\in 
\mathbb{N}
$, then the fuzzy sequential point is called a simple fuzzy sequential point
and it is denoted by $(p_{fx}^{n}$, $r_{n})$. A fuzzy sequential point is
called complete if its base is the set of natural numbers. A fuzzy
sequential point $P_{f}(s)=(p_{fx}^{M}$, $r)$ is said to belong to $A_{f}(s)$
if and only if $P_{f}(s)\leq A_{f}(s)$ and we write $P_{f}(s)\in A_{f}(s)$.
It is said to belong weakly to $A_{f}(s)$, symbolically $P_{f}(s)\in
_{w}A_{f}(s)$ if and only if there exists $n\in M$ such that $%
p_{f}^{n}(x)\leq A_{f}^{n}(x)$. If $R\subseteq M$ and $s$ is the sequence in 
$I$ same to $r$ in $R$ and vanishes outside $R$ then the fuzzy sequential
point $P_{rf}(s)=(p_{fx}^{R}$, $s)$ is called a reduced fuzzy sequential
point of $P_{f}(s)=(p_{fx}^{M}$, $r)$. A sequence $(x$, $L)=\{A_{n}\}_{n}$
of subsets of $X$, where $A_{n}=\{x\}$, for all $n\in L$ and $A_{n}=\Phi =$%
the null subset of $X$, for all $n\in 
\mathbb{N}
-L$, is called a sequential point in $X$.\newline

\section{Definitions and Results:}

\begin{definition}
A family $\delta (s)$ of fuzzy sequential sets on a non empty set $X$
satisfying the properties
\end{definition}

$i)$ $X_{f}^{r}(s)\in \delta (s)~$for~all~$r\in \{0$, $1\}$,

$ii)$ $A_{f}(s)$, $B_{f}(s)\in \delta (s)\Rightarrow A_{f}(s)\wedge
B_{f}(s)\in \delta (s)$ and

$iii)$ for any family $\{A_{fj}(s)\in \delta (s)$, $j\in J\}$, $\underset{%
j\in J}{\vee }A_{fj}(s)\in \delta (s)$.\newline
is called a fuzzy sequential topology(FST) on $X$ and the ordered pair $(X$, 
$\delta (s))$ is called fuzzy sequential topological space(FSTS). The
members of $\delta (s)$ are called open fuzzy sequential sets in $X$.
Complement of an open fuzzy sequential set in $X$ is called closed fuzzy
sequential set in $X.$

\begin{definition}
If $\delta _{1}(s)$ and $\delta _{2}(s)$ be two FSTs on $X$ such that $%
\delta _{1}(s)\subset \delta _{2}(s)$, then we say that $\delta _{2}(s)$ is
finer than $\delta _{1}(s)$ or $\delta _{1}(s)$ is weaker than $\delta
_{2}(s)$.
\end{definition}

\begin{proposition}
If $\delta $ be a fuzzy topology(FT) on $X$, then $\delta ^{%
\mathbb{N}
}$ forms a FST on $X$.
\end{proposition}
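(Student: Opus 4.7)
The plan is to verify the three defining conditions of a FST coordinate-wise, using the fact that $\delta^{\mathbb{N}}$ consists exactly of those sequences $\{A^n\}_n$ whose every term $A^n$ lies in $\delta$, together with the corresponding three axioms of a fuzzy topology on $X$. Since all the operations $\wedge$, $\vee$ on fuzzy sequential sets were defined term-by-term in items (i)--(iii) of Section~1, each check reduces to an already-known statement about $\delta$.

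First, for condition (i) of the definition of FST, I would observe that for $r\in\{0,1\}$ the fuzzy sequential set $X_f^r(s)=\{X_f^n\}_n$ has each $X_f^n$ equal to the constant fuzzy set with value $r$; these constants are $\underline{0}$ and $\underline{1}$, which belong to the fuzzy topology $\delta$, hence $X_f^r(s)\in\delta^{\mathbb{N}}$.

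Next, for condition (ii), suppose $A_f(s)=\{A_f^n\}_n$ and $B_f(s)=\{B_f^n\}_n$ are in $\delta^{\mathbb{N}}$. By definition of $\delta^{\mathbb{N}}$ each $A_f^n$ and $B_f^n$ is an open fuzzy set in $(X,\delta)$, so $A_f^n\wedge B_f^n\in\delta$ for every $n$; hence by item (ii) of Section~1 the intersection $A_f(s)\wedge B_f(s)=\{A_f^n\wedge B_f^n\}_n$ lies in $\delta^{\mathbb{N}}$. For condition (iii), given any family $\{A_{fj}(s)=\{A_{fj}^n\}_n:j\in J\}\subseteq\delta^{\mathbb{N}}$, I would note that for each fixed $n$ the collection $\{A_{fj}^n:j\in J\}$ is a family of members of $\delta$, so $\bigvee_{j\in J}A_{fj}^n\in\delta$, and consequently the sequence $\bigvee_{j\in J}A_{fj}(s)=\{\bigvee_{j\in J}A_{fj}^n\}_n$ belongs to $\delta^{\mathbb{N}}$.

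There is no real obstacle here: the argument is an entirely mechanical transfer of the three fuzzy-topology axioms to the product $\delta^{\mathbb{N}}$, made possible because the lattice operations on fuzzy sequential sets were \emph{defined} coordinate-wise. The only point that deserves a brief remark is that the proof uses only the standing assumption that $\delta$ satisfies the usual Chang-style axioms (containing $\underline{0},\underline{1}$, closed under finite $\wedge$ and arbitrary $\vee$), which is exactly what the hypothesis ``$\delta$ is a fuzzy topology on $X$'' provides.
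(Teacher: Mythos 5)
Your verification is correct and is exactly the ``straightforward'' coordinate-wise argument the paper has in mind (the paper itself omits the details). Nothing to add.
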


\begin{proof}
Proof is straightforward.
\end{proof}

We may construct different FSTs on $X$ from a given FT $\delta $ on $X$, $%
\delta ^{%
\mathbb{N}
}$ is the finest of all these FSTs. Not only that, any FT $\delta $ on $X$
can be considered as a component of some FST on $X$, one of them is $\delta
^{%
\mathbb{N}
}$, there are at least countably many FSTs on $X$ weaker than $\delta ^{N}$
of which $\delta $ is a component. One of them is $\delta ^{^{\prime
}}(s)=\{A_{f}^{n}(s)=\{A_{f}^{n}\}_{n}$; $A_{f}^{n}=A$ for all $n\in 
\mathbb{N}
$ and $A\in \delta \}$.

\begin{proposition}
If $(X$, $\delta (s))$ is a FSTS, then $(X$, $\delta _{n})$ is a fuzzy
topological space(FTS), where $\delta _{n}$=$\{A_{f}^{n}$; $A_{f}^{n}(s)$ = $%
\{A_{f}^{n}\}_{n}\in \delta (s)\}$, $n\in 
\mathbb{N}
$.
\end{proposition}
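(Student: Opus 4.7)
The plan is to verify the three axioms of a fuzzy topology for $\delta_n$ by taking the $n$-th component of each corresponding axiom that holds in $\delta(s)$. Since $\delta_n$ is defined precisely as the collection of $n$-th coordinate fuzzy sets of members of $\delta(s)$, each closure property should transfer from $\delta(s)$ to $\delta_n$ coordinate-wise.

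First I would handle the constant fuzzy sets. Because $X_f^0(s), X_f^1(s) \in \delta(s)$ by axiom $(i)$ of Definition $2.1$, and the $n$-th components of $X_f^0(s)$ and $X_f^1(s)$ are (by definition $viii$ of Section $1$) the constant fuzzy sets with values $0$ and $1$ respectively, these constants lie in $\delta_n$. Next, for binary intersection, given $A^n, B^n \in \delta_n$, I would pick witnesses $A_f(s), B_f(s) \in \delta(s)$ whose $n$-th coordinates are $A^n$ and $B^n$; applying axiom $(ii)$ of Definition $2.1$ gives $A_f(s) \wedge B_f(s) \in \delta(s)$, and reading off the $n$-th coordinate (using definition $ii$ of Section $1$) yields $A^n \wedge B^n \in \delta_n$. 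Finally for arbitrary unions, given a family $\{A_j^n : j \in J\} \subseteq \delta_n$, I would again pick witnesses $A_{fj}(s) \in \delta(s)$ whose $n$-th coordinates are the $A_j^n$; axiom $(iii)$ of Definition $2.1$ and definition $i$ of Section $1$ then give $\bigvee_{j \in J} A_j^n$ as the $n$-th coordinate of a member of $\delta(s)$, hence in $\delta_n$.

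There is no real obstacle here; the entire argument amounts to observing that the componentwise definitions of $\wedge$, $\vee$, and $X_f^r(s)$ make the $n$-th projection map from $\delta(s)$ onto $\delta_n$ a surjection that commutes with all the topological operations. The only mildly subtle point worth being explicit about is the existence of the witness sequences $A_f(s)$, $B_f(s)$, $A_{fj}(s)$, which is immediate from the definition of $\delta_n$ but should be stated so that the reader sees where the elements of $\delta(s)$ come from before the componentwise calculation is performed.
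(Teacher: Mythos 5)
Your proof is correct and is exactly the routine componentwise verification the authors had in mind when they wrote ``Proof is omitted'': the surjectivity of the $n$-th projection from $\delta(s)$ onto $\delta_{n}$ together with the coordinatewise definitions of $\vee$, $\wedge$ and $X_{f}^{r}(s)$ transfers each axiom of Definition~2.1 directly to $\delta_{n}$. No further comment is needed.
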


\begin{proof}
Proof is omitted.
\end{proof}

\begin{definition}
$(X$, $\delta _{n})$, where $n\in 
\mathbb{N}
$, is called the $n^{th}$ component FTS of the FSTS $(X$, $\delta (s))$.
\end{definition}

\begin{proposition}
Let $A_{f}^{n}(s)$=$\{A_{f}^{n}\}_{n}$ be an open(closed) fuzzy sequential
set in the FSTS $(X$, $\delta (s))$, then for each $n\in 
\mathbb{N}
$, $A_{f}^{n}$ is an open(closed) fuzzy set in $(X$, $\delta _{n})$ but the
converse is not necessarily true.
\end{proposition}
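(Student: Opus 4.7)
The plan is to split the statement into two parts: a straightforward definitional verification of the forward implication (covering both the open and the closed case) and an explicit counterexample witnessing that the converse fails. No genuine difficulty is expected in either part, so the main task is simply to ensure that the counterexample really is a bona fide FST.

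For the forward implication in the open case, I would appeal directly to Definition 2.3. Since by that definition $\delta_n = \{A_f^n : \{A_f^n\}_n \in \delta(s)\}$, the hypothesis $\{A_f^n\}_n \in \delta(s)$ immediately yields $A_f^n \in \delta_n$ for every $n \in \mathbb{N}$. For the closed case I would reduce to the open case via the complement operation introduced in item (ix) of the introduction: if $\{A_f^n\}_n$ is closed in $(X,\delta(s))$ then, by definition, $A_f^c(s)=\{(A_f^n)^c\}_n \in \delta(s)$, whence $(A_f^n)^c \in \delta_n$ for every $n$, which is precisely to say that each $A_f^n$ is closed in $(X,\delta_n)$.

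For the converse, I would produce a minimal counterexample. Take any nonempty $X$ and let $\delta(s)=\{X_f^{0}(s),X_f^{1}(s)\}$, consisting only of the two constant sequences. The three FST axioms are immediate, because this two-element family contains $X_f^{0}(s)$ and $X_f^{1}(s)$ and is trivially closed under finite $\wedge$ and arbitrary $\vee$. A direct computation gives $\delta_n = \{X_f^{0},X_f^{1}\}$ (the indiscrete fuzzy topology) for every $n$. Now form the sequence $\{B_f^n\}_n$ by setting $B_f^n = X_f^{1}$ for $n$ even and $B_f^n = X_f^{0}$ for $n$ odd. Each $B_f^n$ lies in $\delta_n$, yet $\{B_f^n\}_n \notin \delta(s)$ because every member of $\delta(s)$ is a constant sequence. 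This exhibits the required converse failure, and the closed version is handled analogously by complementing the sequence $\{B_f^n\}_n$.
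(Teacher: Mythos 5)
Your proof is correct and follows essentially the same route as the paper: the forward direction is the immediate definitional check (with the closed case reduced to the open case by complementation), and the converse is refuted by an FST consisting only of constant sequences of constant fuzzy sets, against which a non-constant componentwise-open sequence fails to be open. The paper uses $\delta(s)=\{X_f^r(s);\,r\in I\}$ with a strictly increasing sequence $\{r_n\}$ where you use the minimal two-element family with an alternating sequence, but this is the same idea.
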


\begin{proof}
Proof of the first part is omitted. For the converse part let us take the
FSTS $(X$, $\delta (s))$ where $X$ is any non empty set and $\delta (s)$=$%
\{X_{f}^{r}(s)$, $r\in I\}$. Let $\{r_{n}\}_{n}$ be a strictly increasing
sequence in $I$ and $A_{f}(s)$=$\{A_{f}^{n}\}_{n}$, where $A_{f}^{n}$=$%
\overline{r_{n}}$ and $\overline{r_{n}}(x)$=$r_{n}$ for all $x\in X$, $n\in 
\mathbb{N}
$. Clearly for each $n\in 
\mathbb{N}
$, $A_{f}^{n}$ is open fuzzy set in $(X$, $\delta _{n})$ but $A_{f}(s)$=$%
\{A_{f}^{n}\}_{n}$ is not an open fuzzy sequential set in the FSTS $(X$, $%
\delta (s))$.
\end{proof}

\begin{definition}
Fuzzy sequential sets $A_{f}(s)$=$\{A_{f}^{n}\}_{n}$ and $B_{f}(s)$=$%
\{B_{f}^{n}\}_{n}$ are called quasi-coincident, denoted by $%
A_{f}(s)qB_{f}(s) $ if and only if there exists $x\in X$ such that $%
A_{f}^{n}(x)>(B_{f}^{n})^{c}(x)$, whenever $A_{f}^{n}$ and $B_{f}^{n}$ both
are non $\overline{0\text{.}}$ We write $A_{f}(s)\overline{q}B_{f}(s)$ to
say that $A_{f}(s)$ and $B_{f}(s)$ are not quasi-coincident.
\end{definition}

\begin{definition}
Fuzzy sequential sets $A_{f}(s)$=$\{A_{f}^{n}\}_{n}$ and $B_{f}(s)$=$%
\{B_{f}^{n}\}_{n}$ are called weakly quasi-coincident, denoted by $%
A_{f}(s)q_{w}B_{f}(s)$ if and only if there exists $x\in X$ such that $%
A_{f}^{n}(x)>(B_{f}^{n})^{c}(x)$ for some $n\in 
\mathbb{N}
$. We write $A_{f}(s)\overline{q}_{w}B_{f}(s)$ to mean that $A_{f}(s)$ and $%
B_{f}(s)$ are not weakly quasi-coincident.
\end{definition}

\begin{definition}
A fuzzy sequential point $P_{f}(s)=(p_{fx}^{M}$, $r)$ is called
quasi-coincident with $A_{f}(s)$=$\{A_{f}^{n}\}_{n}$, denoted by $%
P_{f}(s)qA_{f}(s)$ if and only if $P_{f}^{n}(x)>(A_{f}^{n})^{c}(x)$ for all $%
n\in M$. If $P_{f}(s)=(p_{fx}^{M}$, $r)$ is not quasi-coincident with $%
A_{f}(s)$, then we write $P_{f}(s)\overline{q}A_{f}(s)$.
\end{definition}

\begin{definition}
A fuzzy sequential point $P_{f}(s)=(p_{fx}^{M}$, $r)$ is called weakly
quasi-coincident with $A_{f}(s)$=$\{A_{f}^{n}\}_{n}$, denoted by $%
P_{f}(s)q_{w}A_{f}(s)$ if and only if $P_{f}^{n}(x)>(A_{f}^{n})^{c}(x)$ for
some $n\in M$. If $P_{f}(s)=(p_{fx}^{M}$, $r)$ is not weakly
quasi-coincident with $A_{f}(s)$, then we write $P_{f}(s)\overline{q}%
_{w}A_{f}(s)$. If $P_{f}^{n}(x)>(A_{f}^{n})^{c}(x)$ for some $n\in
L\subseteq M$, then we say that $P_{f}(s)$ is weakly quasi-coincident with $%
A_{f}(s)$ at the sequential point $(x$, $L)$.
\end{definition}

\begin{proposition}
If the fuzzy sequential sets $A_{f}(s)$=$\{A_{f}^{n}\}_{n}$ and $B_{f}(s)$=$%
\{B_{f}^{n}\}_{n}$ are quasi-coincident, then each pair of non $\overline{0}$
fuzzy sets $A_{f}^{n}$ and $B_{f}^{n}$ is also so but the converse is not
necessarily true.
\end{proposition}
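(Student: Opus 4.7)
The plan is to handle the two parts of the statement separately: first the implication, which is essentially an unpacking of definitions, then the converse, which requires exhibiting a concrete counterexample on a two-point set where the witnesses of quasi-coincidence at different indices must differ.

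For the forward direction, I would fix the witness $x\in X$ guaranteed by $A_f(s)\,q\,B_f(s)$, namely the point for which $A_f^n(x)>(B_f^n)^c(x)$ holds whenever both $A_f^n$ and $B_f^n$ are non-$\overline{0}$. Then for any particular index $n$ at which $A_f^n$ and $B_f^n$ are both non-$\overline{0}$, this same $x$ is a witness to the ordinary quasi-coincidence of the fuzzy sets $A_f^n$ and $B_f^n$. So this part reduces to a one-line specialization of the definition.

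For the converse, I would build a fuzzy sequential set on $X=\{a,b\}$ whose components are all non-$\overline{0}$ but whose quasi-coincidence witnesses are forced to depend on $n$. A simple choice is to set $A_f^1(a)=B_f^1(a)=0.9$, $A_f^1(b)=B_f^1(b)=0$, and $A_f^2(a)=B_f^2(a)=0$, $A_f^2(b)=B_f^2(b)=0.9$ (extending to constant sequences of whichever of these two patterns one prefers for $n\geq 3$). Then $A_f^1$ and $B_f^1$ are quasi-coincident via the witness $a$, and $A_f^2, B_f^2$ via the witness $b$, so each non-$\overline{0}$ component pair is quasi-coincident. However, at $x=a$ the inequality fails for $n=2$ because $A_f^2(a)=0$ and $(B_f^2)^c(a)=1$, and symmetrically at $x=b$ it fails for $n=1$; hence no single $x\in X$ simultaneously witnesses quasi-coincidence for all $n$, so $A_f(s)$ and $B_f(s)$ are not quasi-coincident as fuzzy sequential sets.

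The step that requires any thought at all is the counterexample: one has to realize that the definition of $A_f(s)\,q\,B_f(s)$ demands a \emph{single} point $x$ that works uniformly across all indices, which is a strictly stronger condition than having each component pair quasi-coincident at possibly different points. Once that observation is made, the two-point example above is essentially forced. The forward direction carries no real obstacle.
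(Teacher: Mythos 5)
Your proof is correct and follows essentially the same strategy as the paper: the forward direction is the immediate specialization of the single uniform witness $x$ to each index, and the converse is refuted by an example in which the componentwise witnesses are forced to vary with $n$ (the paper uses step functions on $\mathbb{R}$ with the witness switching between $(-\infty,0)$ and $[0,\infty)$; your two-point example is the same idea in simpler form). No gaps.
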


\begin{proof}
Proof of the first part is omitted. For the second part let $A_{f}(s)$=$%
\{A_{f}^{n}\}_{n}$ and $B_{f}(s)$=$\{B_{f}^{n}\}_{n}$ be fuzzy sequential
sets on $%
\mathbb{R}
$\ where%
\begin{eqnarray*}
A_{f}^{1}(x) &=&\frac{2}{3}\text{, }x\in (-\infty \text{, }0)\text{, } \\
&=&\frac{1}{3}\text{, }x\in \lbrack 0\text{, }\infty )\text{.}
\end{eqnarray*}%
\begin{eqnarray*}
A_{f}^{2}(x) &=&\frac{1}{3}\text{, }x\in (-\infty \text{, }0)\text{, } \\
&=&\frac{2}{3}\text{, }x\in \lbrack 0\text{, }\infty )\text{.}
\end{eqnarray*}%
\begin{equation*}
A_{f}^{n}(x)=\frac{3}{4}\text{, }x\in 
\mathbb{R}
\text{ and }n\neq 1\text{, }2\text{.}
\end{equation*}%
\begin{eqnarray*}
B_{f}^{1}(x) &=&\frac{1}{2}\text{, }x\in (-\infty \text{, }0)\text{, } \\
&=&\frac{2}{3}\text{, }x\in \lbrack 0\text{, }\infty )\text{.}
\end{eqnarray*}%
\begin{eqnarray*}
B_{f}^{2}(x) &=&\frac{1}{4}\text{, }x\in (-\infty \text{, }0)\text{, } \\
&=&\frac{3}{7}\text{, }x\in \lbrack 0\text{, }\infty )\text{.}
\end{eqnarray*}%
\begin{equation*}
B_{f}^{n}(x)=\frac{1}{2}\text{, }x\in 
\mathbb{R}
\text{ and }n\neq 1\text{, }2\text{.}
\end{equation*}%
Clearly $A_{f}^{n}qB_{f}^{n}$ for all $n\in 
\mathbb{N}
$ but $A_{f}(s)\overline{q}B_{f}(s)$.
\end{proof}

\begin{corollary}
A fuzzy sequential point $P_{f}(s)=(p_{fx}^{M}$, $r)$ is quasi-coincident
with a fuzzy sequential set $A_{f}(s)$=$\{A_{f}^{n}\}_{n}$ if and only if $%
P_{f}^{n}$ and $A_{f}^{n}$ are so for each $n\in M$.
\end{corollary}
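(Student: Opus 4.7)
The plan is to unpack both sides of the biconditional and observe that each reduces, coordinate by coordinate, to exactly the same family of scalar inequalities; essentially nothing lies beyond the definitions, so the argument is short.

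By the definition of quasi-coincidence of a fuzzy sequential point with a fuzzy sequential set, the statement $P_{f}(s)\,q\,A_{f}(s)$ says precisely that $P_{f}^{n}(x) > (A_{f}^{n})^{c}(x)$ for every $n \in M$, where $x$ is the support of $P_{f}(s)$. On the other side, for each fixed $n \in M$ the component $p_{f}^{n}$ is, by the very definition of a fuzzy sequential point, the classical fuzzy point at $x$ with membership value $r_{n}$ and vanishing off $\{x\}$. The classical quasi-coincidence $p_{f}^{n}\,q\,A_{f}^{n}$ for such a fuzzy point and the fuzzy set $A_{f}^{n}$ amounts to $r_{n} + A_{f}^{n}(x) > 1$, i.e., $P_{f}^{n}(x) > (A_{f}^{n})^{c}(x)$. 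Thus the conditions $p_{f}^{n}\,q\,A_{f}^{n}$ indexed by $n \in M$ assemble into literally the same family of inequalities as the sequential definition demands.

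The forward direction then amounts to fixing an arbitrary $n \in M$ and reading off the $n$th inequality; the converse amounts to collecting those inequalities back across $n \in M$. In neither direction is a calculation required, and no case split is needed.

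The only conceptual point worth remarking is why the analogous converse failed in the preceding Proposition and succeeds here. In the general setting of two fuzzy sequential sets, the witness $x$ in the sequential definition is existential and different coordinates may be forced to pick different witnesses, which is what blocked the converse. For a fuzzy sequential point, however, all of its non-zero coordinates are concentrated at the single support $x$; this common support synchronizes the existential witness across coordinates and removes the obstruction, so the equivalence goes through with no further work. I expect no real obstacle in writing out the proof.
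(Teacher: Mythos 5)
Your proof is correct and coincides with the routine unpacking the paper has in mind when it declares the proof straightforward: both sides of the biconditional reduce to the same family of inequalities $p_{f}^{n}(x)>(A_{f}^{n})^{c}(x)$, $n\in M$, because the components of a fuzzy sequential point all vanish off the single support $x$, which forces the componentwise quasi-coincidence witnesses to agree. Your closing remark correctly identifies why this synchronization is exactly what fails for general fuzzy sequential sets in the preceding proposition's counterexample.
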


\begin{proof}
Proof is straightforward.
\end{proof}

\begin{definition}
A fuzzy sequential set $A_{f}(s)$ in the FSTS $(X$, $\delta (s))$ is called
a neighbourhood(in short nbd) of a fuzzy sequential point $P_{f}(s)$ if and
only if there exists $B_{f}(s)\in \delta (s)$ such that $P_{f}(s)\in
B_{f}(s)\leq A_{f}(s)$. A nbd $A_{f}(s)$ is called open if and only if $%
A_{f}(s)\in \delta (s)$.
\end{definition}

\begin{definition}
A fuzzy sequential set $A_{f}(s)$ in the FSTS $(X$, $\delta (s))$ is called
a weak nbd of a fuzzy sequential point $P_{f}(s)$ if and only if there
exists $B_{f}(s)\in \delta (s)$ such that $P_{f}(s)\in _{w}B_{f}(s)\leq
A_{f}(s)$. A weak nbd $A_{f}(s)$ is called open if and only if $A_{f}(s)\in
\delta (s)$.
\end{definition}

\begin{definition}
A fuzzy sequential set $A_{f}(s)$ in the FSTS $(X$, $\delta (s))$ is called
a $Q$-nbd of a fuzzy sequential point $P_{f}(s)$ if and only if there exists 
$B_{f}(s)\in \delta (s)$ such that $P_{f}(s)qB_{f}(s)\leq A_{f}(s)$. A $Q$%
-nbd $A_{f}(s)$ is called open if and only if $A_{f}(s)\in \delta (s)$.
\end{definition}

\begin{definition}
A fuzzy sequential set $A_{f}(s)$ in the FSTS $(X$, $\delta (s))$ is called
a weak $Q$-nbd of a fuzzy sequential point $P_{f}(s)$ if and only if there
exists $B_{f}(s)\in \delta (s)$ such that $P_{f}(s)q_{w}B_{f}(s)\leq
A_{f}(s) $. A weak $Q$-nbd $A_{f}(s)$ is called open if and only if $%
A_{f}(s)\in \delta (s)$.
\end{definition}

\begin{proposition}
$A_{f}(s)\leq _{w}(\leq )B_{f}(s)$ if and only if $A_{f}(s)$ and $%
B_{f}^{c}(s)$ are not (weakly) quasi-coincident. In particular $P_{f}(s)\in
_{w}(\in )A_{f}(s)$ if and only if $P_{f}(s)$ is not (weakly)
quasi-coincident with $A_{f}^{c}(s)$.
\end{proposition}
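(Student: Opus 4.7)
The plan is to read the proposition as two iff statements: the ``$\leq$ versus not weakly quasi-coincident'' version, obtained by taking the parenthesised alternatives on both sides, and the ``$\leq _{w}$ versus not quasi-coincident'' version, obtained by taking neither. Each is verified by direct manipulation of the definitions together with the componentwise identities $(B_{f}^{c}(s))^{n}=(B_{f}^{n})^{c}$ and $((B_{f}^{n})^{c})^{c}=B_{f}^{n}$, which convert ``$>$ against $B_{f}^{c}(s)$'' into ``$>$ against $B_{f}(s)$''. The final ``in particular'' assertions for a fuzzy sequential point drop out by instantiation.

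The ``$\leq $'' version is pure quantifier manipulation. Unfolding, $A_{f}(s)\leq B_{f}(s)$ is the universal statement $A_{f}^{n}(x)\leq B_{f}^{n}(x)$ for all $n$ and all $x$, while $A_{f}(s)\,q_{w}\,B_{f}^{c}(s)$ is the existential statement that there exist $x$ and $n$ with $A_{f}^{n}(x)>((B_{f}^{n})^{c})^{c}(x)=B_{f}^{n}(x)$. These two assertions are exact logical negations, giving the equivalence at once.

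The ``$\leq _{w}$'' version has to contend with the ``whenever $A_{f}^{n}$ and $B_{f}^{n}$ are both non-$\overline{0}$'' proviso in the definition of $q$. For the forward direction I would pick an $n_{0}$ with $A_{f}^{n_{0}}\leq B_{f}^{n_{0}}$ (supplied by $\leq _{w}$) and show no $x$ can witness $A_{f}(s)\,q\,B_{f}^{c}(s)$: at index $n_{0}$, either the non-$\overline{0}$ hypotheses are active, in which case the strict inequality $A_{f}^{n_{0}}(x)>B_{f}^{n_{0}}(x)$ demanded by quasi-coincidence directly contradicts $A_{f}^{n_{0}}\leq B_{f}^{n_{0}}$, or one of $A_{f}^{n_{0}}$, $(B_{f}^{n_{0}})^{c}$ is identically $\overline{0}$ and the remaining edge cases require separate inspection. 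For the contrapositive of the converse I would start from the failure of $A_{f}^{n}\leq B_{f}^{n}$ at every $n$, extract for each such $n$ a point $x_{n}$ with $A_{f}^{n}(x_{n})>B_{f}^{n}(x_{n})$, and assemble a single $x$ playing the role of the existential witness in $A_{f}(s)\,q\,B_{f}^{c}(s)$ simultaneously at all indices where the non-$\overline{0}$ clause is active. This uniform-witness assembly is the main obstacle, and it is where the ``whenever both non-$\overline{0}$'' proviso in the definition of $q$ has to do the real work.

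Finally, the ``in particular'' assertions for $P_{f}(s)=(p_{fx}^{M},r)$ follow by instantiating the two equivalences just established with $A_{f}(s):=P_{f}(s)$ and $B_{f}(s):=A_{f}(s)$, since by the definitions of $\in $ and $\in _{w}$ for fuzzy sequential points these relations are exactly $P_{f}(s)\leq A_{f}(s)$ and $P_{f}(s)\leq _{w}A_{f}(s)$.
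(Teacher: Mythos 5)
Your handling of the parenthesised version ($\leq$ versus not weakly quasi-coincident) is correct: after cancelling the double complement the two sides are exact logical negations, and nothing more is needed. The real problem is the other version, and the step you defer --- the ``uniform-witness assembly'' for the converse --- is not a technicality that the non-$\overline{0}$ proviso can rescue; it is an irreparable quantifier mismatch. $A_{f}(s)\leq _{w}B_{f}(s)$ is a statement of the form ``$\exists n\,\forall x$'', whereas the negation of $A_{f}(s)\,q\,B_{f}^{c}(s)$ is of the form ``$\forall x\,\exists n$'', and these are not equivalent. Concretely, take $X=\{a,b\}$ and put $A_{f}^{n}(a)=B_{f}^{n}(b)=0.9$, $A_{f}^{n}(b)=B_{f}^{n}(a)=0.1$ for odd $n$, with the roles of $a$ and $b$ interchanged for even $n$. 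Every component and every complemented component is non-$\overline{0}$, no single $n$ satisfies $A_{f}^{n}\leq B_{f}^{n}$, so $A_{f}(s)\not\leq _{w}B_{f}(s)$; yet neither $x=a$ nor $x=b$ satisfies $A_{f}^{n}(x)>B_{f}^{n}(x)$ at every index, so $A_{f}(s)\,\overline{q}\,B_{f}^{c}(s)$. The forward direction fails too, on exactly the edge cases you flag but do not settle: if $A_{f}^{n_{0}}=\overline{0}$ then $\leq _{w}$ holds via $n_{0}$, but index $n_{0}$ is simply removed from the quasi-coincidence condition, which can then be witnessed at the remaining indices. So for arbitrary fuzzy sequential sets that half of the proposition cannot be proved as literally stated; a proof can only be given for the parenthesised reading and for the point case.

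The second issue is the ``in particular'' clause: it does not follow by instantiating the set--set equivalence, because the paper gives \emph{separate} definitions of (weak) quasi-coincidence of a fuzzy sequential point with a fuzzy sequential set. There the condition $p_{f}^{n}(x)>(A_{f}^{n})^{c}(x)$ is required for all (respectively, some) $n\in M$, where $M$ is the base of the point and $x$ its support; there is no existential quantifier over $x\in X$ and no ``both non-$\overline{0}$'' proviso, so the index set being quantified over is genuinely different from the one in the set--set definition. With the point definitions the claims are true and immediate: $P_{f}(s)\,q\,A_{f}^{c}(s)$ unwinds to $p_{f}^{n}(x)>A_{f}^{n}(x)$ for all $n\in M$, whose negation is exactly $P_{f}(s)\in _{w}A_{f}(s)$, and negating the ``some $n\in M$'' version gives $P_{f}(s)\leq A_{f}(s)$ since $p_{f}^{n}=\overline{0}$ off $M$. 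You should prove the point statements directly from those definitions rather than as corollaries of the set case.
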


\begin{proof}
Proof is omitted.
\end{proof}

\begin{proposition}
Let $\{A_{fj}(s)$, $j\in J\}$ be a family of fuzzy sequential sets in $X$.
Then a fuzzy sequential point $P_{f}(s)q_{w}(\vee _{j\in J}A_{fj}(s))$ if
and only if $P_{f}(s)q_{w}A_{fj}(s)$ for some $j\in J$.
\end{proposition}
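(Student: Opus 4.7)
The plan is to reduce both sides of the biconditional to pointwise inequalities at the support of $P_f(s)$, unfold the weak quasi-coincidence in terms of complements of suprema, and then exploit the standard ``sup/inf extraction'' property of real numbers. Writing $P_f(s)=(p_{fx}^M,r)$ and using the fuzzy De Morgan identity, for every $n\in\mathbb{N}$ we have
\[
\Bigl(\bigvee_{j\in J}A_{fj}^{n}\Bigr)^{c}(x)=\bigwedge_{j\in J}(A_{fj}^{n})^{c}(x)=\inf_{j\in J}\bigl(1-A_{fj}^{n}(x)\bigr).
\]
Thus $P_{f}(s)\,q_{w}\,(\vee_{j\in J}A_{fj}(s))$ unfolds to the existence of some $n\in M$ with $p_f^n(x)>\inf_{j\in J}(A_{fj}^n)^c(x)$, while $P_{f}(s)\,q_{w}\,A_{fj}(s)$ for some $j$ unfolds to the existence of $j_0\in J$ and $n\in M$ with $p_f^n(x)>(A_{fj_0}^n)^c(x)$. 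The aim is then to match these two statements.

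For the forward implication, I would fix an $n\in M$ witnessing the weak quasi-coincidence with the join and appeal to the elementary fact that if $a>\inf_{j\in J}b_j$ in $I$, then there exists $j_0\in J$ with $a>b_{j_0}$. Applying this with $a=p_f^n(x)$ and $b_j=(A_{fj}^n)^c(x)$ yields a concrete $j_0\in J$ such that $p_f^n(x)>(A_{fj_0}^n)^c(x)$, which, for the same $n\in M$, gives $P_f(s)\,q_w\,A_{fj_0}(s)$.

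For the reverse implication, assume the existence of $j_0\in J$ and $n\in M$ with $p_f^n(x)>(A_{fj_0}^n)^c(x)$. Since $(A_{fj_0}^n)^c(x)\ge \inf_{j\in J}(A_{fj}^n)^c(x)=(\vee_{j\in J}A_{fj}^n)^c(x)$, transitivity of the strict/nonstrict inequality chain gives $p_f^n(x)>(\vee_{j\in J}A_{fj}^n)^c(x)$, whence $P_f(s)\,q_w\,(\vee_{j\in J}A_{fj}(s))$.

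The only nontrivial step is the extraction of the witness index $j_0$ from the strict inequality with an infimum in the forward direction; everything else is just the unfolding of definitions and De Morgan. No assumption on the cardinality of $J$ or on attainment of the infimum is needed, because the inequality is strict, so the standard definition of infimum immediately delivers the required $j_0$.
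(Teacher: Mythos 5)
Your proof is correct and follows essentially the same route as the paper: both reduce the claim to the pointwise inequality $p_f^n(x)+\sup_{j}A_{fj}^n(x)>1$ at a single $n\in M$ and then extract a witness index from the strict inequality with the supremum (the paper does this with an explicit $\varepsilon_k$, you do it via De Morgan and the infimum of the complements). The reverse direction is the same easy monotonicity observation in both.
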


\begin{proof}
Let $P_{f}(s)q_{w}(\vee _{j\in J}A_{fj}(s))$ where $P_{f}(s)=(p_{fx}^{M}$, $%
r)$ and $A_{fj}(s)=\{A_{fj}^{n}\}_{n}$. \newline
This implies%
\begin{equation*}
p_{f}^{k}(x)+S_{f}^{k}(x)>1\text{ for some }n=k\in M\text{, where}\vee
_{j\in J}A_{fj}(s)=\{S_{f}^{n}\}_{n}\text{.}
\end{equation*}%
Therefore $S_{f}^{k}(x)=1-p_{f}^{k}(x)+\varepsilon _{k}$ where $\varepsilon
_{k}>0$.---------------------\TEXTsymbol{>}$(i)$ \newline
Also $S_{f}^{k}(x)-\varepsilon _{k}<A_{fj}^{k}(x)$ for some $j\in J$%
.------------------------\TEXTsymbol{>}$(ii)$ \newline
From $(i)$ and $(ii)$ we have $p_{f}^{k}(x)+A_{fj}^{k}(x)>1$, that is, $%
P_{f}(s)q_{w}A_{fj}(s)$ for some $j\in J$. Other implication is
straightforward.
\end{proof}

\begin{corollary}
If $P_{f}(s)qA_{fj}(s)$ for some $j\in J$, then $P_{f}(s)q(\vee _{j\in
J}A_{fj}(s))$, where $\{A_{fj}(s)$, $j\in J\}$ is a family of fuzzy
sequential sets in $X$ but not conversely.
\end{corollary}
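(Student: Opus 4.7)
The plan is to dispatch the forward implication directly from Definition 2.4 via the monotonicity of $\vee$ and of the fuzzy complement, and then build a small explicit counterexample for the non-converse.

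For the forward direction, I would fix some $j_{0}\in J$ with $P_{f}(s)qA_{fj_{0}}(s)$ and write $P_{f}(s)=(p_{fx}^{M},r)$ and $\vee_{j\in J}A_{fj}(s)=\{S_{f}^{n}\}_{n}$. By Definition 2.4, $p_{f}^{n}(x)>(A_{fj_{0}}^{n})^{c}(x)$ for every $n\in M$. Since $S_{f}^{n}(x)=\sup_{j\in J}A_{fj}^{n}(x)\geq A_{fj_{0}}^{n}(x)$, passing to complements yields $(S_{f}^{n})^{c}(x)\leq (A_{fj_{0}}^{n})^{c}(x)$, and combining these inequalities delivers $p_{f}^{n}(x)>(S_{f}^{n})^{c}(x)$ for every $n\in M$, i.e.\ $P_{f}(s)q(\vee_{j\in J}A_{fj}(s))$. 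This is essentially a one-line argument.

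For the failure of the converse the strategy is to exploit the universal quantifier over $n\in M$ in the definition of $q$: each individual $A_{fj}$ can be made to fail the strict inequality $p_{f}^{n}(x)+A_{fj}^{n}(x)>1$ at a different index $n$, while the pointwise supremum repairs the deficit at every index simultaneously. Concretely I would take $X=\mathbb{R}$ (or any nonempty set), pick $x\in X$, set $M=\{1,2\}$ with $r_{1}=r_{2}=\tfrac{3}{5}$, and use $J=\{1,2\}$ with $A_{f1}^{1}(x)=A_{f2}^{2}(x)=\tfrac{3}{5}$ and $A_{f1}^{2}(x)=A_{f2}^{1}(x)=\tfrac{1}{5}$, extending by any convenient constants outside the relevant indices. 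Then for each $j\in J$ there is an $n\in M$ where $p_{f}^{n}(x)+A_{fj}^{n}(x)=\tfrac{4}{5}<1$, so $P_{f}(s)\overline{q}A_{fj}(s)$; meanwhile $(\vee_{j\in J}A_{fj}^{n})(x)=\tfrac{3}{5}$ for both $n\in M$, giving $p_{f}^{n}(x)+(\vee_{j\in J}A_{fj}^{n})(x)=\tfrac{6}{5}>1$ throughout $M$, so $P_{f}(s)q(\vee_{j\in J}A_{fj}(s))$.

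There is no serious obstacle here; the main subtlety is remembering that $q$ for a fuzzy sequential point requires the strict inequality for \textbf{every} $n\in M$, in contrast to $q_{w}$, which only needs some $n$ and for which the analogous statement becomes an equivalence as in the preceding proposition. Once that distinction is kept in view, both halves are mechanical.
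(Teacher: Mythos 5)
Your proposal is correct and takes essentially the same route as the paper: the forward implication is the one-line monotonicity argument (which the paper omits), and the converse fails via a two-index counterexample in which each $A_{fj}$ violates the strict inequality $p_{f}^{n}(x)+A_{fj}^{n}(x)>1$ at a different $n\in M$ while the supremum satisfies it at both. Your symmetric example with $r_{1}=r_{2}=\tfrac{3}{5}$ plays exactly the role of the paper's example at $x=0.5$ with $r_{1}=r_{2}=\tfrac{7}{10}$.
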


\begin{proof}
Proof of the first part is omitted. For second part, let $%
A_{fj}(s)=\{A_{fj}^{n}\}_{n}$, $j=1$, $2$ be fuzzy sequential sets in $%
\mathbb{R}
$, where%
\begin{eqnarray*}
A_{f1}^{1}(x) &=&0\text{ for all }x\in 
\mathbb{R}
-(0\text{, }1)\text{, } \\
&=&\frac{1}{4}\text{ for all }x\in (0\text{, }1)\text{.}
\end{eqnarray*}%
\begin{eqnarray*}
A_{f1}^{2}(x) &=&0\text{ for all }x\in 
\mathbb{R}
-(\frac{1}{3}\text{, }\frac{2}{3})\text{, } \\
&=&\frac{2}{3}\text{ for all }x\in (\frac{1}{3}\text{, }\frac{2}{3})\text{.}
\end{eqnarray*}%
\begin{equation*}
A_{f1}^{n}(x)=0=A_{f2}^{n}(x)\text{ for all }x\in 
\mathbb{R}
\text{, }n\neq 1\text{, }2
\end{equation*}%
\begin{eqnarray*}
A_{f2}^{1}(x) &=&0\text{ for all }x\in 
\mathbb{R}
-(-\frac{1}{2}\text{, }1)\text{, } \\
&=&\frac{1}{3}\text{ for all }x\in (-\frac{1}{2}\text{, }1)\text{.}
\end{eqnarray*}%
\begin{eqnarray*}
A_{f2}^{2}(x) &=&0\text{ for all }x\in 
\mathbb{R}
-(-\frac{1}{2}\text{, }2)\text{, } \\
&=&\frac{1}{5}\text{ for all }x\in (-\frac{1}{2}\text{, }2)\text{.}
\end{eqnarray*}%
The fuzzy sequential point $P_{f}(s)=(p_{f0.5}^{M}$, $r)$ where $M=\{1$, $%
2\} $, $r=\{r_{n}\}_{n}$ and $r_{1}=r_{2}=\frac{7}{10}$ is quasi-coincident
with $A_{f1}(s)\vee A_{f2}(s)$ but it is not so with any one of them.
\end{proof}

\begin{definition}
A subfamily $\beta $ of a FST $\delta (s)$ on $X$ is called a base for $%
\delta (s)$ if and only if to every $A_{f}(s)\in \delta (s)$, there exists a
subfamily $\{B_{fj}(s)$, $j\in J\}$ of $\beta $ such that $A_{f}(s)=\vee
_{j\in J}B_{fj}(s)$.
\end{definition}

\begin{definition}
A subfamily $S=\{S_{f\lambda }(s)$; $\lambda \in \Lambda \}$ of a FST $%
\delta (s)$ on $X$ is called a subbase for $\delta (s)$ if and only if $%
\{\wedge _{j\in J}S_{fj}(s)$; $J=$finite subset of $\Lambda \}$ forms a base
for $\delta (s)$.
\end{definition}

\begin{theorem}
A subfamily $\beta $ of a FST $\delta (s)$ on $X$ is a base for $\delta (s)$
if and only if for each fuzzy sequential point $P_{f}(s)$ in $(X$, $\delta
(s))$ and for every open weak $Q$ nbd $A_{f}(s)$ of $P_{f}(s)$, there exists
a member $B_{f}(s)\in \beta $ such that $P_{f}(s)q_{w}B_{f}(s)\leq A_{f}(s)$.
\end{theorem}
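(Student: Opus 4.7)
The plan is to prove the two implications separately. The forward direction reduces quickly to the earlier proposition on weak quasi-coincidence with arbitrary joins, while the reverse direction requires constructing a carefully chosen simple fuzzy sequential point to force a contradiction.

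For the forward direction, I would assume $\beta$ is a base for $\delta(s)$. Given $P_f(s)$ and an open weak $Q$-nbd $A_f(s)$ of it, the definition of weak $Q$-nbd delivers some $C_f(s) \in \delta(s)$ with $P_f(s) q_w C_f(s) \leq A_f(s)$. A one-line monotonicity observation promotes this to $P_f(s) q_w A_f(s)$: if $p_f^{n}(x) > 1 - C_f^{n}(x)$ and $C_f^{n} \leq A_f^{n}$, the same index $n$ witnesses $p_f^{n}(x) > 1 - A_f^{n}(x)$. Writing $A_f(s) = \vee_{j \in J} B_{fj}(s)$ with $B_{fj}(s) \in \beta$ through the base property, and invoking the earlier proposition on $q_w$ and unions, then yields some $j_0 \in J$ with $P_f(s) q_w B_{fj_0}(s)$; taking $B_f(s) = B_{fj_0}(s)$ finishes this direction, as $B_{fj_0}(s) \leq A_f(s)$ is automatic.

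For the reverse direction, I would fix $A_f(s) \in \delta(s)$ and set $\mathcal{F} = \{B_f(s) \in \beta : B_f(s) \leq A_f(s)\}$, aiming to show $A_f(s) = \vee \mathcal{F}$. The inequality $\vee \mathcal{F} \leq A_f(s)$ is immediate; for the reverse, I would argue by contradiction. Assuming some $x \in X$ and $n \in \mathbb{N}$ satisfy $A_f^{n}(x) > \sup_{B \in \mathcal{F}} B^{n}(x)$, I would pick $r_n$ in the half-open interval $(1 - A_f^{n}(x),\, 1 - \sup_{B \in \mathcal{F}} B^{n}(x)]$ and form the simple fuzzy sequential point $(p_{fx}^{n}, r_n)$. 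By construction this point is weakly quasi-coincident with $A_f(s)$, so $A_f(s)$ is an open weak $Q$-nbd of it; the hypothesis then supplies some $B_f(s) \in \beta$ with $(p_{fx}^{n}, r_n) q_w B_f(s) \leq A_f(s)$. The containment places $B_f(s) \in \mathcal{F}$, while weak quasi-coincidence at index $n$ forces $B_f^{n}(x) > 1 - r_n \geq \sup_{B \in \mathcal{F}} B^{n}(x) \geq B_f^{n}(x)$, a contradiction.

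The delicate point is the choice of $r_n$: I must check that the interval $(1 - A_f^{n}(x),\, 1 - \sup_{B \in \mathcal{F}} B^{n}(x)]$ is non-empty and contained in $(0, 1]$, so that $r_n$ produces a legitimate non-zero sequence in $I$ and $(p_{fx}^{n}, r_n)$ qualifies as a fuzzy sequential point per the definition given in the introduction. Non-emptiness follows from the standing assumption $\sup_{B \in \mathcal{F}} B^{n}(x) < A_f^{n}(x)$, and strict positivity of the upper endpoint follows from the same inequality combined with $A_f^{n}(x) \leq 1$. Once this bookkeeping is in place, the remainder of the argument is mechanical.
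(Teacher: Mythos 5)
Your proposal is correct and follows essentially the same route as the paper: the forward direction via the decomposition of $A_{f}(s)$ into members of $\beta$ together with the proposition on weak quasi-coincidence with joins, and the reverse direction by contradiction, constructing a fuzzy sequential point at a point $x$ and index $n$ where $A_{f}^{n}(x)$ exceeds the supremum of the basic sets below $A_{f}(s)$ (your choice of $r_{n}$ at the right endpoint of your interval is exactly the paper's $r_{n}=1-O_{f}^{n}(x)$). The only differences are cosmetic: you use a simple fuzzy sequential point where the paper allows a larger base $M$, and you are more careful about verifying that $r_{n}$ lies in $(0,1]$.
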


\begin{proof}
The necessary part is straightforward. To prove its sufficiency, if possible
let $\beta $ be not a base for $\delta (s)$. Then there exists a member $%
A_{f}(s)\in \delta (s)-\beta $, such that $O_{f}(s)=\vee \{B_{f}(s)\in \beta 
$; $B_{f}(s)<A_{f}(s)\}\neq A_{f}(s)$, and hence there is an $x\in X$ and an 
$M\subset 
\mathbb{N}
$ such that $O_{f}^{n}(x)<A_{f}^{n}(x)$, for all $n\in M$. Let $%
r=\{r_{n}\}_{n}$ where $r_{n}=1-O_{f}^{n}(x)>0$ whenever $n\in M$ and $%
r_{n}=0$ whenever $n\in 
\mathbb{N}
-M$, then $A_{f}^{n}(x)+r_{n}>O_{f}^{n}(x)+r_{n}=1$, for all $n\in M$ and $%
(p_{fx}^{M}$, $r)=P_{f}(s)q_{w}A_{f}(s)$. Therefore $A_{f}(s)$ is an open
weak $Q$ nbd of $P_{f}(s)$. Now $B_{f}(s)=\{B_{f}^{n}\}_{n}\in \beta $, $%
B_{f}(s)\leq A_{f}(s)\Rightarrow B_{f}(s)<A_{f}(s)\Rightarrow B_{f}(s)\leq
O_{f}(s)\Rightarrow B_{f}^{n}(x)+r_{n}\leq O_{f}^{n}(x)+r_{n}=1$ for all $%
n\in M\Rightarrow P_{f}(s)\overline{q_{w}}B_{f}(s)$ which is a
contradiction. Hence the proof.
\end{proof}

\begin{proposition}
If $\beta $ be a base for the FST $\delta (s)$ on $X$, then $\beta
_{n}=\{B_{f}^{n}$; $B_{f}(s)=\{B_{f}^{n}\}_{n}\in \beta \}$ will form a base
for the component FT $\delta _{n}$ on $X$ for each $n\in 
\mathbb{N}
$ but not conversely.

\begin{proof}
Proof of the first part is straightforward. For converse part we consider
the FSTS $(%
\mathbb{R}
$, $\delta ^{%
\mathbb{N}
})$, where $%
\mathbb{R}
$ is the set of real numbers and $\delta =\{\overline{r}$; $r\in \lbrack 0$, 
$1]\}$, $\overline{r}(x)=r$ for all $x\in 
\mathbb{R}
$, which is a FT on $%
\mathbb{R}
$. Clearly $\beta _{n}=\{\overline{r}$; $r\in (0$, $1)\cap Q\}$, where $Q$
is the set of rational numbers, is a base for the component FT $\delta _{n}^{%
\mathbb{N}
}$ on $X$ for each $n\in 
\mathbb{N}
$ but $\beta (s)=\{X_{f}^{r}(s)$; $r\in (0$, $1)\cap Q\}$ is not a base for
the FST $\delta ^{%
\mathbb{N}
}$ on $X$ because $A_{f}(s)=\{A_{f}^{n}\}_{n}$ where $A_{f}^{n}=\overline{(%
\frac{1}{n})}$ for all $n\in 
\mathbb{N}
$ is a open fuzzy sequential set in $(%
\mathbb{R}
$, $\delta ^{%
\mathbb{N}
})$ but can not be written as a supremum of a subfamily of $\beta (s)$.
\end{proof}
\end{proposition}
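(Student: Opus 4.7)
For the forward direction, the plan is to lift decompositions in $\delta(s)$ down to the $n$-th coordinate. I would take an arbitrary $A_f^n \in \delta_n$ and use the definition of component FTS to find $A_f(s) = \{A_f^m\}_m \in \delta(s)$ whose $n$-th term is exactly $A_f^n$. Since $\beta$ is a base for $\delta(s)$, there is a subfamily $\{B_{fj}(s)\}_{j \in J} \subseteq \beta$ with $A_f(s) = \vee_{j \in J} B_{fj}(s)$. By the componentwise definition of the supremum of fuzzy sequential sets, reading off the $n$-th coordinate gives $A_f^n = \vee_{j \in J} B_{fj}^n$, and every $B_{fj}^n$ lies in $\beta_n$ by definition. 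Hence $\beta_n$ is a base for $\delta_n$.

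For the converse failure, I would follow the paper's suggested example and verify its two halves carefully. Take $\delta = \{\overline{r} : r \in [0,1]\}$ on $\mathbb{R}$; then every $\delta_n$ (as a component of $\delta^{\mathbb{N}}$) equals $\delta$. I would first check that $\beta_n = \{\overline{r} : r \in (0,1) \cap \mathbb{Q}\}$ is a base for this $\delta$: given $\overline{t} \in \delta$, density of the rationals lets me write $t = \sup\{r \in (0,1) \cap \mathbb{Q} : r \leq t\}$, so $\overline{t} = \vee \{\overline{r} : r \in (0,1) \cap \mathbb{Q},\ r \leq t\}$, a supremum of members of $\beta_n$.

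The obstruction to $\beta(s) = \{X_f^r(s) : r \in (0,1) \cap \mathbb{Q}\}$ being a base is then a rigidity observation: every member of $\beta(s)$ is a \emph{constant} fuzzy sequential set (constant in $n$), and the componentwise supremum of any family of constant sequences is again a constant sequence. Taking $A_f(s) = \{\overline{1/n}\}_n$, which lies in $\delta^{\mathbb{N}}$ since each $\overline{1/n} \in \delta$, I would observe that $A_f(s)$ is genuinely non-constant in $n$ and therefore cannot be written as $\vee_{j \in J} X_f^{r_j}(s)$ for any family of rationals. This exhibits an open fuzzy sequential set not expressible as a supremum of members of $\beta(s)$, so $\beta(s)$ is not a base for $\delta^{\mathbb{N}}$.

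The only step that requires real attention is the verification of the counterexample; the forward direction is purely mechanical once one uses that suprema of fuzzy sequential sets are taken componentwise. The mild subtlety to watch in the counterexample is that one must check both that $\beta_n$ honestly generates $\delta_n$ (density argument), and that $\beta(s)$ genuinely fails to generate $\delta^{\mathbb{N}}$ (rigidity of constant sequences under suprema), rather than merely asserting the latter.
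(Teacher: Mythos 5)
Your proposal is correct and follows essentially the same route as the paper: the forward direction is the mechanical componentwise-supremum argument the paper omits, and the converse uses the paper's own counterexample $\delta=\{\overline{r}:r\in[0,1]\}$ with $A_{f}(s)=\{\overline{1/n}\}_{n}$. Your one genuine addition is the explicit rigidity observation that suprema of constant-in-$n$ sequences are again constant, which is the justification the paper merely asserts when it says $A_{f}(s)$ ``can not be written as a supremum of a subfamily of $\beta(s)$.''
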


\begin{definition}
Let $A_{f}(s)$ be any fuzzy sequential set in a FSTS $(X$, $\delta (s))$.
The closure $\overline{A_{f}(s)}$ and interior $\overset{o}{A_{f}}(s)$ of $%
A_{f}(s)$ are defined as%
\begin{equation*}
\overline{A_{f}(s)}=\wedge \{C_{f}(s)\text{; }A_{f}(s)\leq C_{f}(s)\text{, }%
C_{f}^{c}(s)\in \delta (s)\}\text{, }
\end{equation*}%
\begin{equation*}
\overset{o}{A_{f}}(s)=\vee \{O_{f}(s)\text{; }O_{f}(s)\leq A_{f}(s)\text{, }%
O_{f}(s)\in \delta (s)\}\text{.}
\end{equation*}
\end{definition}

\begin{proposition}
If $\overline{A_{f}(s)}=\{\overline{A_{f}^{n}}\}_{n}$ in $(X$, $\delta (s))$%
, then $cl(A_{f}^{n})\leq \overline{A_{f}^{n}}$ in $(X$, $\delta _{n})$ for
each $n\in 
\mathbb{N}
$, where $cl(A_{f}^{n})$ is the closure of $A_{f}^{n}$ in $(X$, $\delta
_{n}) $.
\end{proposition}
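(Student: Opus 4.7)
The plan is to unfold the definition of closure in the FSTS and push it componentwise, using the fact (established in the earlier proposition on components of an open/closed fuzzy sequential set) that every closed fuzzy sequential set has closed $n^{th}$ components.

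First, I would write out the relevant family explicitly: set $\mathcal{C} = \{C_f(s) : A_f(s)\le C_f(s),\ C_f^{c}(s)\in\delta(s)\}$, so that by the definition of closure in $(X,\delta(s))$ we have $\overline{A_f(s)} = \wedge\,\mathcal{C}$. Because infima of fuzzy sequential sets are taken componentwise (as in item $(ii)$ of the opening definitions, extended to arbitrary families), this yields $\overline{A_f^n} = \wedge\{C_f^n : C_f(s)\in\mathcal{C}\}$ for each $n\in\mathbb{N}$.

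Next, fix $n\in\mathbb{N}$ and take any $C_f(s)=\{C_f^m\}_m\in\mathcal{C}$. From $A_f(s)\le C_f(s)$ I immediately get $A_f^n\le C_f^n$, and from $C_f^c(s)\in\delta(s)$ the earlier proposition on components gives $(C_f^n)^c\in\delta_n$, i.e.\ $C_f^n$ is closed in $(X,\delta_n)$. Hence $C_f^n$ is a closed fuzzy set in $(X,\delta_n)$ containing $A_f^n$, so by definition of $cl(A_f^n)$ in the component FTS, $cl(A_f^n)\le C_f^n$. Taking the infimum over all $C_f(s)\in\mathcal{C}$ gives the desired inequality $cl(A_f^n)\le \overline{A_f^n}$.

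I do not anticipate a genuine obstacle here; the whole argument is a bookkeeping step using that ``closed fuzzy sequential $\Rightarrow$ componentwise closed'' together with commutation of componentwise operations with arbitrary infima. The only mild subtlety worth flagging is that the reverse inequality $\overline{A_f^n}\le cl(A_f^n)$ is not claimed, and indeed should not be expected: a closed fuzzy set in $\delta_n$ containing $A_f^n$ need not arise as the $n^{th}$ component of any closed fuzzy sequential set above $A_f(s)$, so the infimum defining $\overline{A_f^n}$ is taken over a (possibly) strictly smaller family than the one defining $cl(A_f^n)$, producing a generally larger set. This is analogous in spirit to the converse failure already exhibited in the proposition on components of open/closed sets.
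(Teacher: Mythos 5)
Your proof is correct and is exactly the argument the paper has in mind when it declares the proof ``straightforward'': each $C_{f}(s)$ in the defining family for $\overline{A_{f}(s)}$ has $n^{th}$ component closed in $\delta _{n}$ (by the earlier proposition on components) and lying above $A_{f}^{n}$, so $cl(A_{f}^{n})$ is below every such component and hence below their pointwise infimum $\overline{A_{f}^{n}}$. Your closing remark about why the reverse inequality fails matches the paper's own counterexample given immediately after the proposition.
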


\begin{proof}
Proof is straightforward.
\end{proof}

Here we cite such an example where the equality in the proposition $2.8$
does not hold. Let $X=[0$, $1]$ and $\delta (s)=\{X_{f}^{r}(s)$; $r\in
\lbrack 0$, $1]\}$. If $A_{f}(s)=P_{f}(s)=(p_{f\frac{1}{3}}^{%
\mathbb{N}
}$, $r)$, $r=\{\frac{1}{2}-\frac{1}{3n}\}_{n}$, then $\overline{A_{f}(s)}%
=X_{f}^{\frac{1}{2}}(s)$. Here $cl(A_{f}^{n})=\overline{(\frac{1}{2}-\frac{1%
}{3n})}$, whereas $\overline{A_{f}^{n}}=\overline{(\frac{1}{2})}$.

\begin{definition}
The dual of a fuzzy sequential point $P_{f}(s)=(p_{fx}^{M}$, $r)$ is a fuzzy
sequential point $P_{df}(s)=(p_{fx}^{M}$, $t)$, where $r=\{r_{n}\}_{n}$, $%
t=\{t_{n}\}_{n}$ and%
\begin{eqnarray*}
t_{n} &=&1-r_{n}\text{ for all }n\in M\text{, } \\
&=&0\text{ for all }n\in 
\mathbb{N}
-M\text{.}
\end{eqnarray*}
\end{definition}

\begin{theorem}
Every $Q$ nbd of a fuzzy sequential point $P_{f}(s)$ is weakly
quasi-coincident with a fuzzy sequential set $A_{f}(s)$ implies $P_{f}(s)\in 
\overline{A_{f}(s)}$ implies every weak $Q$ nbd of $P_{f}(s)$ and $A_{f}(s)$
are weakly quasi-coincident.

\begin{proof}
Let $P_{f}(s)=(p_{fx}^{M}$, $r)$. $P_{f}(s)\in \overline{A_{f}(s)}$ if for
every closed fuzzy sequential set $C_{f}(s)\geq A_{f}(s)$, $P_{f}(s)\in
C_{f}(s)$, that is $p_{f}^{n}(x)\leq c_{f}^{n}(x)$ for all $n\in
M\Longrightarrow P_{f}(s)\in \overline{A_{f}(s)}$ if for every open fuzzy
sequential set $B_{f}(s)=\{B_{f}^{n}\}_{n}\leq A_{f}^{c}(s)$, $%
B_{f}^{n}(x)\leq 1-p_{f}^{n}(x)$ for all $n\in M$; that is $P_{f}(s)\in 
\overline{A_{f}(s)}$ if for every open fuzzy sequential set $%
B_{f}(s)=\{B_{f}^{n}\}_{n}$ satisfying $B_{f}^{n}(x)>1-p_{f}^{n}(x)$ for all 
$n\in M$, $B_{f}(s)\nleq A_{f}^{c}(s)$, which implies the first part. Now
let $P_{f}(s)\in \overline{A_{f}(s)}$. If possible let there exists a weak $%
Q $ nbd $N_{f}(s)$ of $P_{f}(s)$ such that $N_{f}(s)\overline{q_{w}}A_{f}(s)$%
. Then there exists an open fuzzy sequential set $B_{f}(s)$\ such that $%
P_{f}(s)q_{w}B_{f}(s)\leq N_{f}(s)$. Now $N_{f}(s)\overline{q_{w}}A_{f}(s)$
and $B_{f}(s)\leq N_{f}(s)\Rightarrow B_{f}^{n}(x)+A_{f}^{n}(x)\leq 1$ for
all $x\in X$, $n\in 
\mathbb{N}
\Rightarrow A_{f}(s)\leq B_{f}^{c}(s)\Rightarrow P_{f}(s)\in
B_{f}^{c}(s)\Rightarrow p_{f}^{n}(x)+B_{f}^{n}(x)\leq 1$ for all $n\in 
\mathbb{N}
$. This contradicts the fact that $P_{f}(s)q_{w}B_{f}(s)$. Hence the result
follows.
\end{proof}
\end{theorem}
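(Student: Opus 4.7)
The plan is to prove the two implications separately; call them (a) (``every $Q$-nbd of $P_f(s)$ is weakly quasi-coincident with $A_f(s)$ implies $P_f(s)\in \overline{A_f(s)}$'') and (b) (``$P_f(s)\in \overline{A_f(s)}$ implies every weak $Q$-nbd of $P_f(s)$ is weakly quasi-coincident with $A_f(s)$'').

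For (b), I would argue by contradiction. Suppose there is a weak $Q$-nbd $N_f(s)$ of $P_f(s)$ with $N_f(s)\,\overline{q}_w\,A_f(s)$. Unpacking the definition of weak $Q$-nbd, there is an open $B_f(s)\in \delta(s)$ with $P_f(s)\,q_w\,B_f(s)\leq N_f(s)$. The assumption $N_f(s)\,\overline{q}_w\,A_f(s)$ translates pointwise-in-$n$ to $N_f(s)\leq A_f^c(s)$, so $B_f(s)\leq A_f^c(s)$, i.e.~$A_f(s)\leq B_f^c(s)$. Since $B_f^c(s)$ is closed, $\overline{A_f(s)}\leq B_f^c(s)$, and hence $P_f(s)\leq B_f^c(s)$. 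This gives $p_f^n(x)+B_f^n(x)\leq 1$ for every $n\in \mathbb{N}$, contradicting $P_f(s)\,q_w\,B_f(s)$.

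For (a), my approach is to dualize the closed-set description of closure. The condition $P_f(s)\in \overline{A_f(s)}$ is equivalent to ``$p_f^n(x)\leq c_f^n(x)$ for every $n\in M$ and every closed $C_f(s)\geq A_f(s)$''. Passing to complements $B_f(s)=C_f^c(s)$ rephrases this as ``for every open $B_f(s)\leq A_f^c(s)$, $B_f^n(x)\leq 1-p_f^n(x)$ for all $n\in M$''. Contraposing the inner implication, this should become ``for every open $B_f(s)$ which is $Q$-coincident with $P_f(s)$, $B_f(s)$ is not contained in $A_f^c(s)$'', equivalently $B_f(s)\,q_w\,A_f(s)$. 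With this reformulation in hand, (a) would close by observing that any open $B_f(s)$ with $P_f(s)\,q\,B_f(s)$ is itself a $Q$-nbd of $P_f(s)$ (using $B_f(s)$ as its own witness in the definition), so the hypothesis yields $B_f(s)\,q_w\,A_f(s)$ as required.

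The delicate step, and the one I expect to be the main obstacle, is the contraposition in (a). The natural negation of ``$B_f^n(x)\leq 1-p_f^n(x)$ for all $n\in M$'' is the existential ``there exists $n\in M$ with $B_f^n(x)>1-p_f^n(x)$'', which corresponds to $P_f(s)\,q_w\,B_f(s)$ rather than $P_f(s)\,q\,B_f(s)$. Aligning this dualized condition with the $Q$-nbd hypothesis, which invokes the stronger form of quasi-coincidence (requiring the inequality for \emph{all} $n\in M$), demands careful matching of quantifiers across the equivalence. This is precisely where the argument is subtle, and it is the step I would scrutinize most when writing out the formal proof.
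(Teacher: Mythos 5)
Your treatment of the second implication is correct and is essentially the paper's own argument (pass from $N_{f}(s)\overline{q}_{w}A_{f}(s)$ to $B_{f}(s)\leq A_{f}^{c}(s)$, hence $\overline{A_{f}(s)}\leq B_{f}^{c}(s)$, and contradict $P_{f}(s)q_{w}B_{f}(s)$). The problem is the first implication, and the quantifier mismatch you flag at the end is not a delicate step to be massaged later --- it is a genuine gap that your proposed closing move does not bridge. Dualizing the closed-set description of $\overline{A_{f}(s)}$, as you do, yields: for every open $B_{f}(s)$ with $P_{f}(s)q_{w}B_{f}(s)$ (an \emph{existential} condition over $n\in M$), one must have $B_{f}(s)\nleq A_{f}^{c}(s)$, i.e.\ $B_{f}(s)q_{w}A_{f}(s)$. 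Your final observation only supplies this for open sets with $P_{f}(s)qB_{f}(s)$ (the \emph{universal} condition over $n\in M$), because only those are $Q$-nbds of themselves; an open set that is weakly but not strongly quasi-coincident with $P_{f}(s)$ need not be a $Q$-nbd of $P_{f}(s)$ at all, so the hypothesis says nothing about it. When $M$ has at least two elements the two classes of open sets genuinely differ, and the implication as literally stated can fail. For instance, on $X=\{x\}$ take $\delta(s)=\{X_{f}^{0}(s),X_{f}^{1}(s),B_{f}(s)\}$ with $B_{f}^{1}(x)=0.6$, $B_{f}^{2}(x)=0.3$, $B_{f}^{n}(x)=0$ otherwise, let $P_{f}(s)=(p_{fx}^{M},r)$ with $M=\{1,2\}$, $r_{1}=r_{2}=\tfrac{1}{2}$, and let $A_{f}(s)=B_{f}^{c}(s)$. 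The only open set strongly quasi-coincident with $P_{f}(s)$ is $X_{f}^{1}(s)$ (the inequality fails for $B_{f}(s)$ at $n=2$), so the only $Q$-nbd of $P_{f}(s)$ is $X_{f}^{1}(s)$, which is weakly quasi-coincident with $A_{f}(s)$; yet $\overline{A_{f}(s)}=A_{f}(s)$ and $p_{f}^{1}(x)=\tfrac{1}{2}>0.4=A_{f}^{1}(x)$, so $P_{f}(s)\notin\overline{A_{f}(s)}$.

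For what it is worth, the paper's own proof commits exactly the swap you were wary of: in its chain of ``that is'' reformulations it silently replaces ``$B_{f}^{n}(x)>1-p_{f}^{n}(x)$ for some $n\in M$'' by ``for all $n\in M$'', which converts an equivalence into a one-way implication pointing in the wrong direction for what is needed. The first implication becomes true --- and your argument closes with no further work --- if ``$Q$ nbd'' in the hypothesis is read as ``weak $Q$ nbd'': then every open $B_{f}(s)$ with $P_{f}(s)q_{w}B_{f}(s)$ is a weak $Q$-nbd of itself, the hypothesis applies to it, and the dualized condition follows. Under that reading the first and third conditions coincide, and the theorem is the equivalence between $P_{f}(s)\in\overline{A_{f}(s)}$ and $P_{f}(s)$ being an adherence point of $A_{f}(s)$, which is the form the paper actually uses later (e.g.\ in proving $\overline{A_{f}(s)}=A_{f}(s)\vee\overset{d}{A}_{f}(s)$). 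I would recommend either proving only that corrected statement or exhibiting the obstruction explicitly.
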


\begin{corollary}
A fuzzy sequential point $P_{f}(s)\in \overline{A_{f}(s)}$ if and only if
each nbd of its dual point $P_{df}(s)$ is weakly quasi-coincident with $%
A_{f}(s)$.
\end{corollary}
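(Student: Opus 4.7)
My plan is to derive the corollary from Theorem 2.2 by comparing the nbd system of the dual point $P_{df}(s)$ with the $Q$-nbd and weak $Q$-nbd systems of $P_{f}(s)$. Writing $P_{f}(s)=(p_{fx}^{M},r)$ and $P_{df}(s)=(p_{fx}^{M},t)$ with $t_{n}=1-r_{n}$ on $M$ and $t_{n}=0$ off $M$, the containment $P_{df}(s)\in B_{f}(s)$ amounts exactly to $r_{n}+B_{f}^{n}(x)\geq 1$ for every $n\in M$, whereas $P_{f}(s)qB_{f}(s)$ reads the strict inequality $r_{n}+B_{f}^{n}(x)>1$ for every $n\in M$. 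Consequently every $Q$-nbd of $P_{f}(s)$ is automatically a nbd of $P_{df}(s)$, which is what will let me cycle the implications of Theorem 2.2.

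For the sufficiency half, assume each nbd of $P_{df}(s)$ is weakly quasi-coincident with $A_{f}(s)$. Then, by the previous paragraph, every $Q$-nbd of $P_{f}(s)$ is weakly quasi-coincident with $A_{f}(s)$, and the first implication of Theorem 2.2 delivers $P_{f}(s)\in \overline{A_{f}(s)}$.

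For necessity I would proceed by contrapositive: suppose some nbd $N_{f}(s)$ of $P_{df}(s)$, witnessed by an open $B_{f}(s)\leq N_{f}(s)$, satisfies $N_{f}(s)\overline{q}_{w}A_{f}(s)$. Then $A_{f}(s)\leq N_{f}^{c}(s)\leq B_{f}^{c}(s)$, and since $B_{f}^{c}(s)$ is closed one gets $\overline{A_{f}(s)}\leq B_{f}^{c}(s)$; plugging in $P_{f}(s)\in \overline{A_{f}(s)}$ forces $r_{n}+B_{f}^{n}(x)\leq 1$ for every $n\in M$. Combined with $P_{df}(s)\in B_{f}(s)$, which gives $r_{n}+B_{f}^{n}(x)\geq 1$, this drives the argument into the boundary case $B_{f}^{n}(x)=1-r_{n}$ on $M$, at which point $B_{f}(s)$ itself is neither a $Q$-nbd nor a weak $Q$-nbd of $P_{f}(s)$ and one cannot cite Theorem 2.2 directly.

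The principal obstacle is precisely this boundary case. I intend to close the gap by producing inside $N_{f}(s)$ a genuine weak $Q$-nbd $C_{f}(s)\leq N_{f}(s)$ of $P_{f}(s)$, built as a suitable union of members of $\delta (s)$ for which $C_{f}^{n}(x)>1-r_{n}$ at some $n\in M$; applied to this $C_{f}(s)$, the second implication of Theorem 2.2 yields $C_{f}(s)q_{w}A_{f}(s)$, and hence $N_{f}(s)q_{w}A_{f}(s)$, contradicting the standing assumption. Whether such an enlargement always exists is the delicate point, and I would approach it by exploiting the arbitrary-union axiom of $\delta (s)$ together with the fact that any strictly larger open enclosure of $B_{f}(s)$ inside $N_{f}(s)$ must break the equality $B_{f}^{n}(x)=1-r_{n}$ at some index in $M$.
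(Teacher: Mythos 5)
Your sufficiency half is correct: since $P_{f}(s)qB_{f}(s)$ forces $r_{n}+B_{f}^{n}(x)>1$, hence $\geq 1$, on $M$, every $Q$-nbd of $P_{f}(s)$ is a nbd of $P_{df}(s)$, so your hypothesis passes to all $Q$-nbds of $P_{f}(s)$ and the first implication of Theorem 2.2 gives $P_{f}(s)\in \overline{A_{f}(s)}$. The necessity half, however, cannot be completed, and the boundary case $B_{f}^{n}(x)=1-r_{n}$ that you isolated is not merely an obstruction to your argument but a genuine counterexample to the statement. Take $X=\{a\}$, $\delta (s)=\{X_{f}^{0}(s)$, $X_{f}^{1}(s)$, $B_{f}(s)\}$ with $B_{f}^{n}(a)=\frac{1}{2}$ for all $n$, let $A_{f}(s)=B_{f}(s)$ and $P_{f}(s)=(p_{fa}^{\mathbb{N}}$, $r)$ with $r_{n}=\frac{1}{2}$. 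Here $B_{f}^{c}(s)=B_{f}(s)$ is closed, so $\overline{A_{f}(s)}=B_{f}(s)$ and $P_{f}(s)\in \overline{A_{f}(s)}$; on the other hand $P_{df}(s)$ has sequential grade $\frac{1}{2}$, so $B_{f}(s)$ is an open nbd of $P_{df}(s)$, yet $B_{f}^{n}(a)+A_{f}^{n}(a)=1$ is never $>1$, i.e.\ $B_{f}(s)\overline{q}_{w}A_{f}(s)$. In particular the enlargement you hope to build does not exist: every open $C_{f}(s)\leq B_{f}(s)$ satisfies $C_{f}^{n}(a)\leq \frac{1}{2}=1-r_{n}$, so $B_{f}(s)$ contains no weak $Q$-nbd of $P_{f}(s)$ whatsoever, and the arbitrary-union axiom cannot manufacture one.

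The source of the trouble is the identification on which the paper's one-line proof rests, namely that the $Q$-nbds of $P_{f}(s)$ are ``exactly'' the nbds of $P_{df}(s)$: as your own computation shows, only the inclusion from $Q$-nbds to dual nbds holds (strict versus non-strict inequality at $x$ on $M$), and the example above realizes the discrepancy. What Theorem 2.2 does yield is the equivalence of $P_{f}(s)\in \overline{A_{f}(s)}$ with ``every $Q$-nbd (equivalently, every weak $Q$-nbd) of $P_{f}(s)$ is weakly quasi-coincident with $A_{f}(s)$'': every $Q$-nbd is a weak $Q$-nbd because the base $M$ is nonempty, so the chain of implications in Theorem 2.2 closes into a cycle. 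That is the corrected form of the corollary, and your first paragraph is precisely the comparison needed to relate it to nbds of the dual point in the one direction that survives.
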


\begin{proof}
Proof is straightforward since $Q$ nbd of a fuzzy sequential point is
exactly the nbd of its dual point.
\end{proof}

\begin{theorem}
A fuzzy sequential point $P_{f}(s)\in \overset{o}{A}_{f}(s)$ if and only if
its dual point $P_{df}(s)\notin \overline{A_{f}^{c}(s)}$.
\end{theorem}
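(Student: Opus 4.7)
The plan is to reduce the biconditional to the duality $\overline{A_{f}^{c}(s)} = \bigl(\overset{o}{A_{f}}(s)\bigr)^{c}$ and then translate the two membership conditions into inequalities at the single support $x$ across the index set $M$.

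First I would verify the duality identity. The map $C_{f}(s) \mapsto C_{f}^{c}(s)$ is a bijection between the closed fuzzy sequential sets $C_{f}(s) \geq A_{f}^{c}(s)$ and the open fuzzy sequential sets $C_{f}^{c}(s) \leq A_{f}(s)$. Taking the meet over the former family, using De Morgan coordinatewise (complementation swaps $\wedge$ with $\vee$ in $(I^{X})^{\mathbb{N}}$), and comparing with the join appearing in the definition of $\overset{o}{A_{f}}(s)$ yields $\overline{A_{f}^{c}(s)} = \bigl(\overset{o}{A_{f}}(s)\bigr)^{c}$. This is the only genuine content in the argument; everything that follows is bookkeeping.

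Next, write $P_{f}(s) = (p_{fx}^{M}, r)$ and recall from Definition 2.9 that $P_{df}(s) = (p_{fx}^{M}, t)$, where $t_{n} = 1 - r_{n}$ for $n \in M$ and $t_{n} = 0$ otherwise. Both sequential points vanish outside the support $x$ and outside the index set $M$, so $P_{f}(s) \leq \overset{o}{A_{f}}(s)$ reduces to $r_{n} \leq (\overset{o}{A_{f}})^{n}(x)$ for every $n \in M$, and $P_{df}(s) \leq \overline{A_{f}^{c}(s)}$ reduces to $1 - r_{n} \leq (\overline{A_{f}^{c}(s)})^{n}(x)$ for every $n \in M$. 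Substituting the identity from the first step rewrites the latter as $(\overset{o}{A_{f}})^{n}(x) \leq r_{n}$ for every $n \in M$.

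Negating, $P_{df}(s) \notin \overline{A_{f}^{c}(s)}$ amounts to the existence of some $n \in M$ with $(\overset{o}{A_{f}})^{n}(x) > r_{n}$, which --- under the paper's conventions for membership, compatible with Proposition 2.4 --- is exactly the assertion $P_{f}(s) \in \overset{o}{A_{f}}(s)$. The main obstacle, such as it is, is keeping the strict-versus-non-strict distinction consistent with the usage elsewhere in the paper; the heart of the matter is simply the interior/closure duality under complementation, after which the two sides of the biconditional become term-by-term negations of each other on the index set $M$.
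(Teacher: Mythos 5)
Your reduction to the duality $\overline{A_{f}^{c}(s)}=(\overset{o}{A}_{f}(s))^{c}$ (this is Proposition 2.7(xiv), and your derivation of it is correct) and your coordinatewise translation of the two memberships are both fine: with $P_{f}(s)=(p_{fx}^{M},r)$, one gets $P_{f}(s)\in \overset{o}{A}_{f}(s)$ iff $r_{n}\leq (\overset{o}{A}_{f})^{n}(x)$ for \emph{every} $n\in M$, while $P_{df}(s)\in \overline{A_{f}^{c}(s)}$ becomes $(\overset{o}{A}_{f})^{n}(x)\leq r_{n}$ for every $n\in M$. The gap is in your last sentence. Negating the second condition gives ``there \emph{exists} $n\in M$ with $(\overset{o}{A}_{f})^{n}(x)>r_{n}$,'' and that is not the first condition: you have traded a universally quantified non-strict inequality for an existentially quantified strict one. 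The two coincide only when $M$ is a singleton, and even then they disagree on the boundary $r_{k}=(\overset{o}{A}_{f})^{k}(x)$. Concretely, if $M=\{1,2\}$, $r_{1}=r_{2}=\tfrac{1}{2}$, $(\overset{o}{A}_{f})^{1}(x)=0.6$ and $(\overset{o}{A}_{f})^{2}(x)=0.3$ (realizable with $X=\{a\}$ and $\delta(s)$ generated by a single open set equal to $A_{f}(s)$), your own computation yields $P_{df}(s)\notin \overline{A_{f}^{c}(s)}$ while $P_{f}(s)\notin \overset{o}{A}_{f}(s)$, so the ``term-by-term negation'' you invoke does not occur. An appeal to ``the paper's conventions for membership'' cannot repair a quantifier flip; this is the step that needs an argument, and carried out honestly it in fact shows the asserted equivalence is problematic for points with non-singleton base.

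For contrast, the paper does not compute at the level of components at all: it takes $B_{f}(s)=\overset{o}{A}_{f}(s)$ as an open set with $P_{f}(s)\in B_{f}(s)\leq A_{f}(s)$, converts $B_{f}(s)\leq A_{f}(s)$ into ``$B_{f}(s)$ is not weakly quasi-coincident with $A_{f}^{c}(s)$'' via Proposition 2.4, and then cites Corollary 2.3, which describes $\overline{A_{f}^{c}(s)}$ through neighbourhoods of the dual point. Your route is genuinely different and more transparent, but as written it does not establish the biconditional; to salvage it you would at minimum have to restrict to simple fuzzy sequential points and address the strict/non-strict boundary case, or else explain why the existential and universal readings agree in the situation at hand.
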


\begin{proof}
Let $P_{f}(s)\in \overset{o}{A}_{f}(s)$ $\Rightarrow $there exists an open
fuzzy sequential set $B_{f}(s)$ such that $P_{f}(s)\in B_{f}(s)\leq
A_{f}(s)\Rightarrow B_{f}(s)$ and $A_{f}^{c}(s)$ are not weakly
quasi-coincident$\Rightarrow P_{df}(s)\notin \overline{A_{f}^{c}(s)}$.
Conversely let $P_{df}(s)\notin \overline{A_{f}^{c}(s)}$. Then there exists
an open nbd $B_{f}(s)$ of $P_{f}(s)$ which is not weakly quasi-coincident
with $A_{f}^{c}(s)\Rightarrow P_{f}(s)\in B_{f}(s)\leq A_{f}(s)\Rightarrow
P_{f}(s)\in \overset{o}{A}_{f}(s)$.
\end{proof}

\begin{proposition}
In a FSTS $(X$,$\delta (s))$, the following hold:\newline
(i) $\overline{X_{f}^{r}(s)}=X_{f}^{r}(s)$, $r\in \{0$, $1\}$, (ii) $%
A_{f}(s) $ is closed if and only if $\overline{A_{f}(s)}=A_{f}(s)$, (iii) $%
\overline{\overline{A_{f}(s)}}=\overline{A_{f}(s)}$, (iv) $\overline{%
A_{f}(s)\vee B_{f}(s)}=\overline{A_{f}(s)}\vee \overline{B_{f}(s)}$, (v) $%
\overline{A_{f}(s)\wedge B_{f}(s)}\leq \overline{A_{f}(s)}\wedge \overline{%
B_{f}(s)}$, (vi) $(X_{f}^{r}(s))^{o}=X_{f}^{r}(s)$, $r\in \{0$, $1\}$, (vii) 
$A_{f}(s)$ is open if and only if $\overset{o}{A}_{f}(s)=A_{f}(s)$, (viii) $(%
\overset{o}{A}_{f}(s))^{o}=\overset{o}{A}_{f}(s)$, (ix) $(A_{f}(s)\wedge
B_{f}(s))^{o}=\overset{o}{A}_{f}(s)\wedge \overset{o}{B}_{f}(s)$, (x) $%
(A_{f}(s)\vee B_{f}(s))^{o}=\overset{o}{A}_{f}(s)\vee \overset{o}{B}_{f}(s)$%
, (xi) $\overset{o}{A}_{f}(s)=(\overline{A_{f}^{c}(s))}^{c}$, (xii) $%
\overline{A_{f}(s)}=\overline{(A_{f}^{c}(s))^{o}}$, (xiii) $(\overline{%
A_{f}(s)})^{c}=(A_{f}^{c}(s))^{o}$, $($xiv$)$ $\overline{(A_{f}^{c}(s))}=(%
\overset{o}{A}_{f}(s))^{c}$.
\end{proposition}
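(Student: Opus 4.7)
The plan is to reduce all fourteen items to three structural facts and then run the classical arguments componentwise. The first fact is the De Morgan identities $(\vee_{j} A_{fj}(s))^{c} = \wedge_{j} A_{fj}^{c}(s)$ and $(\wedge_{j} A_{fj}(s))^{c} = \vee_{j} A_{fj}^{c}(s)$, immediate from the componentwise definitions of $\vee$, $\wedge$ and complement in the introduction. The second is that $(X_{f}^{0}(s))^{c} = X_{f}^{1}(s)$, so both are simultaneously open and closed. The third is the dual of Definition 2.1: complementing the axioms yields that arbitrary $\wedge$ and finite $\vee$ of closed fuzzy sequential sets are again closed.

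Using these ingredients, I would prove (i)--(v) in order. (i) follows from the second fact. For (ii), necessity is immediate since a closed $A_{f}(s)$ is the smallest element of the family defining $\overline{A_{f}(s)}$, while sufficiency uses that $\overline{A_{f}(s)}$ is an $\wedge$ of closed sets, hence closed. Part (iii) is (ii) applied to $\overline{A_{f}(s)}$. For (iv), monotonicity of closure gives $\overline{A_{f}(s)} \vee \overline{B_{f}(s)} \leq \overline{A_{f}(s) \vee B_{f}(s)}$, and the reverse holds because $\overline{A_{f}(s)} \vee \overline{B_{f}(s)}$ is a closed set dominating $A_{f}(s) \vee B_{f}(s)$. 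Part (v) is pure monotonicity. The dual items (vi)--(x) are obtained by swapping open/closed, $\wedge/\vee$ and closure/interior throughout.

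For the duality relations (xi)--(xiv), I would start from $\overset{o}{A}_{f}(s) = \vee\{O_{f}(s) \in \delta(s) : O_{f}(s) \leq A_{f}(s)\}$, take complements, and apply the De Morgan identity. Relabelling $C_{f}(s) = O_{f}^{c}(s)$ turns the family into $\{C_{f}(s) : A_{f}^{c}(s) \leq C_{f}(s),\ C_{f}(s)\ \textrm{closed}\}$, whose $\wedge$ is $\overline{A_{f}^{c}(s)}$. Complementing once more yields (xi); then (xii)--(xiv) follow by substituting $A_{f}^{c}(s)$ for $A_{f}(s)$ in (xi) or its mirror and using involutivity of complement.

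The main obstacle is expected to be item (x), where $\geq$ is immediate from monotonicity of interior but $\leq$ does not follow from any direct classical argument (in ordinary point-set topology only $\geq$ holds in general). To obtain equality one would need to exploit structure specific to fuzzy sequential opens; a plausible route is to invoke Theorem 2.3 to translate interior membership via the dual point into a closure statement, reducing $\leq$ in (x) to a closure inequality which can be attacked using (iv) applied to the complements together with (xi). All remaining items are routine componentwise verifications resting only on the three structural facts listed above.
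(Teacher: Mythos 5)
Your reduction to the three structural facts is sound, and items (i)--(ix), (xi), (xiii) and (xiv) go through exactly as you describe; the paper declares the proof ``straightforward'' and omits it, so the standard componentwise argument is all there is to give. The genuine problem is the step you yourself flag as the main obstacle: there is no rescue for (x) as an equality, because the printed equality is false. Take $X=\{a,b\}$ with the indiscrete FST $\delta(s)=\{X_{f}^{0}(s),X_{f}^{1}(s)\}$, and let $A_{f}^{n}$ be the fuzzy set with value $1$ at $a$ and $0$ at $b$, and $B_{f}^{n}$ the fuzzy set with value $0$ at $a$ and $1$ at $b$, for every $n$. Then $A_{f}(s)\vee B_{f}(s)=X_{f}^{1}(s)$ is open, so $(A_{f}(s)\vee B_{f}(s))^{o}=X_{f}^{1}(s)$, while the only open fuzzy sequential set below $A_{f}(s)$ (or below $B_{f}(s)$) is $X_{f}^{0}(s)$, whence $\overset{o}{A}_{f}(s)\vee \overset{o}{B}_{f}(s)=X_{f}^{0}(s)$. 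Your proposed detour through Theorem 2.3, (iv) and (xi) cannot close this gap: complementing (iv) and applying (xiii) yields $((A_{f}^{c}(s)\wedge B_{f}^{c}(s))^{o}=(A_{f}^{c}(s))^{o}\wedge (B_{f}^{c}(s))^{o}$, i.e.\ a restatement of (ix), not of (x). Only the inequality $\overset{o}{A}_{f}(s)\vee \overset{o}{B}_{f}(s)\leq (A_{f}(s)\vee B_{f}(s))^{o}$, dual to (v), is provable, and you should say so rather than promise a proof of the equality.

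A second, smaller defect is item (xii), which you claim follows by substituting $A_{f}^{c}(s)$ into (xi). As printed, (xii) reads $\overline{A_{f}(s)}=\overline{(A_{f}^{c}(s))^{o}}$, which is not what that substitution produces and is in fact false: for $A_{f}(s)=X_{f}^{1}(s)$ the left side is $X_{f}^{1}(s)$ while the right side is $\overline{X_{f}^{0}(s)}=X_{f}^{0}(s)$. It is evidently a misprint for $\overline{A_{f}(s)}=((A_{f}^{c}(s))^{o})^{c}$, i.e.\ the complemented form of (xiii), and your argument does prove that corrected version. The remaining twelve items are handled correctly by your outline.
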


\begin{proof}
Proof is straightforward.
\end{proof}

\begin{definition}
A fuzzy sequential point $P_{f}(s)$ is called an adherence point of a fuzzy
sequential set $A_{f}(s)$ if and only if every weak $Q$ nbd of $P_{f}(s)$ is
weakly quasi-coincident with $A_{f}(s)$.
\end{definition}

\begin{definition}
A fuzzy sequential point $P_{f}(s)$ is called an accumulation point of a
fuzzy sequential set $A_{f}(s)$ if and only if $P_{f}(s)$ is an adherence
point of $A_{f}(s)$ and every weak $Q$ nbd of $P_{f}(s)$ and $A_{f}(s)$ are
weakly quasi-coincident at some sequential point having different base or
support from that of $P_{f}(s)$ whenever $P_{f}(s)\in A_{f}(s)$.
\end{definition}

\begin{proposition}
Any reduced sequential point of an accumulation point of a fuzzy sequential
set is also an accumulation point of it.
\end{proposition}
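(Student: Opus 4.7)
Write $P_f(s) = (p_{fx}^M, r)$ for the accumulation point and $P_{rf}(s) = (p_{fx}^R, s)$ for its reduced fuzzy sequential point, so $R \subseteq M$, $s_n = r_n$ for $n \in R$ and $s_n = 0$ otherwise. The plan is to verify the two clauses of the accumulation-point definition (adherence, plus the ``different sequential point'' condition when $P_{rf}(s) \in A_f(s)$) for $P_{rf}(s)$ by pulling them back from the corresponding properties of $P_f(s)$.

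The key lemma I would establish first is that every weak $Q$-nbd of $P_{rf}(s)$ is also a weak $Q$-nbd of $P_f(s)$. Indeed, if $N_f(s)$ is a weak $Q$-nbd of $P_{rf}(s)$, pick an open $B_f(s)$ with $P_{rf}(s) q_w B_f(s) \leq N_f(s)$; the witnessing index $n \in R$ satisfies $r_n + B_f^n(x) = s_n + B_f^n(x) > 1$, and since $R \subseteq M$ this same $n$ shows $P_f(s) q_w B_f(s) \leq N_f(s)$. The adherence clause for $P_{rf}(s)$ then drops out immediately from the adherence of $P_f(s)$.

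For the second clause, assume $P_{rf}(s) \in A_f(s)$ and let $N_f(s)$ be a weak $Q$-nbd of $P_{rf}(s)$. By the lemma it is a weak $Q$-nbd of $P_f(s)$, and by adherence of $P_f(s)$ we obtain $y \in X$ and $n_0 \in \mathbb{N}$ with $N_f^{n_0}(y) > (A_f^{n_0})^c(y)$. I would then split into two subcases. If $P_f(s) \in A_f(s)$, I invoke the full accumulation property of $P_f(s)$ to select a sequential point $(y, L)$ with $(y, L) \neq (x, M)$ at which the weak quasi-coincidence takes place; should this $(y, L)$ happen to equal $(x, R)$, I enlarge $L$ by any element of $M \setminus R$ (which is nonempty because $L \neq M$ combined with $y = x$ forces $R \neq M$), keeping the witness $n_0$ in $L$. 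If instead $P_f(s) \notin A_f(s)$, the accumulation clause for $P_f(s)$ is vacuous and I must work directly from the witness already obtained via adherence: when $y \neq x$ I take $L = \{n_0\}$; when $y = x$ I take $L = \{n_0\}$ if $R \neq \{n_0\}$ and $L = \{n_0, n_1\}$ for any $n_1 \neq n_0$ otherwise, so that $(y, L) \neq (x, R)$ in every instance.

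The main obstacle will be exactly this last subcase: the hypothesis ``accumulation point'' contributes nothing beyond adherence for $P_f(s)$ there, so the extra structure required by $P_{rf}(s)$ must be manufactured by exploiting the freedom in choosing the base $L$ of a sequential point (any subset of $\mathbb{N}$ containing the witness $n_0$ is admissible). All the other bookkeeping --- passing weak $Q$-nbds from $P_{rf}(s)$ up to $P_f(s)$, and relocating a witness from an $M$-based sequential point to an $R$-based one --- is direct from the definitions.
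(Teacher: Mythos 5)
Your reduction lemma (every weak $Q$-nbd of $P_{rf}(s)$ is a weak $Q$-nbd of $P_f(s)$, because a witnessing index $n\in R$ is also in $M$ and $s_n=r_n$ there) is correct, and the adherence clause for $P_{rf}(s)$ does follow from it. The proof breaks at exactly the place you flag as the main obstacle: the assumption that the base $L$ of the sequential point witnessing weak quasi-coincidence may be taken to be \emph{any} set containing a witnessing index $n_0$. If that were so, the ``different base or support'' requirement in the definition of accumulation point would be satisfiable whenever there is any quasi-coincidence at all (shrink or pad $L$ around $n_0$ until $L\neq M$), so accumulation point would collapse to adherence point; but the example the paper gives right after this proposition --- $(p_{fa}^{\{1,2\}},(2/3,2/3,0,\dots))$ adherent to, contained in, yet \emph{not} an accumulation point of its $A_f(s)$ --- only makes sense if the witnessing sequential point at a support $y$ is $(y,L_y)$ with $L_y$ the full set of indices $n$ for which $N_f^n(y)+A_f^n(y)>1$. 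Under that reading you may not enlarge $L$, and both of the steps where you manufacture $L$ collapse.

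The damage is worst in your last subcase ($P_{rf}(s)\in A_f(s)$ but $P_f(s)\notin A_f(s)$), where under that reading the statement itself fails, so no bookkeeping can save it. Take $X=\{a,b\}$, $\delta(s)=\{X_f^0(s),X_f^1(s),B_f(s)\}$ with $B_f^1(a)=\frac34$ and $B_f^n(y)=0$ otherwise; let $A_f^1(a)=\frac12$ and $A_f^n(y)=0$ otherwise; let $P_f(s)=(p_{fa}^{\{1,2\}},r)$ with $r_1=r_2=\frac12$, and $R=\{1\}$. Every weak $Q$-nbd of $P_f(s)$ lies above $B_f(s)$ and hence is quasi-coincident with $A_f(s)$ at $(a,1)$, so $P_f(s)$ is adherent; since $r_2>A_f^2(a)=0$ we have $P_f(s)\notin A_f(s)$, so the second clause is vacuous and $P_f(s)$ is an accumulation point. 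But $P_{rf}(s)=(p_{fa}^{1},\frac12)\in A_f(s)$, and its open weak $Q$-nbd $B_f(s)$ meets $A_f(s)$ only at support $a$ and index $1$, i.e.\ only at the sequential point $(a,\{1\})$ of $P_{rf}(s)$ itself. So either the loose reading of ``at a sequential point'' is intended (in which case the proposition is a one-line triviality but the paper's subsequent example is wrong), or the proposition needs the hypothesis $P_f(s)\in A_f(s)$. In that surviving case your argument is repairable, but by modifying the neighbourhood rather than $L$: if a weak $Q$-nbd $N_f(s)$ of $P_{rf}(s)$ met $A_f(s)$ only at $(x,R)$, raise $N_f^n(x)$ to $1$ for each $n\in M\setminus R$ (these become witnesses because $P_f(s)\in A_f(s)$ forces $A_f^n(x)\geq r_n>0$ on $M$); the enlarged set is still a weak $Q$-nbd of $P_f(s)$ but now meets $A_f(s)$ only at $(x,M)$, contradicting the accumulation property of $P_f(s)$.
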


\begin{proof}
Proof is omitted.
\end{proof}

From the proposition $2.10$, we see that any simple reduced sequential point
of an accumulation point of a fuzzy sequential set is also an accumulation
point of it but the converse is not true. For let $X=\{a$, $b\}$ and $\delta
(s)=\{X_{f}^{r}(s)$, $G_{f}(s)$; $r\in \{0$, $1\}\}$ where $%
G_{f}(s)=\{G_{f}^{n}\}_{n}$, $G_{f}^{n}(a)=\frac{1}{2}$ $G_{f}^{n}(b)=0$ for 
$n\in 
\mathbb{N}
$. Let $A_{f}(s)=\{A_{f}^{n}\}_{n}$ where $A_{f}^{n}=\overline{(\frac{2}{3})}
$ for $n=1$, $2$ and $A_{f}^{n}=0$ otherwise. Then the fuzzy sequential
point $P_{f}(s)=(p_{fa}^{M}$, $r)$ where $r=\{r_{n}\}_{n}$ with $r_{1}=r_{2}=%
\frac{2}{3}$ and $r_{n}=0$ otherwise, is not an accumulation point of $%
A_{f}(s)$ though $(p_{fa}^{1}$, $\frac{2}{3})$ and $(p_{fa}^{2}$, $\frac{2}{3%
})$ both are accumulation point of $A_{f}(s)$.

\begin{definition}
The union of all accumulation points of a fuzzy sequential set $A_{f}(s)$ is
called the fuzzy derived sequential set of $A_{f}(s)$ and it is denoted by $%
\overset{d}{A}_{f}(s)$.
\end{definition}

\begin{theorem}
In a FSTS $(X$, $\delta (s))$, $\overline{A_{f}(s)}=A_{f}(s)\vee \overset{d}{%
A}_{f}(s)$.
\end{theorem}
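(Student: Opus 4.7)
The plan is to prove the two inclusions $A_f(s) \vee \overset{d}{A}_f(s) \le \overline{A_f(s)}$ and $\overline{A_f(s)} \le A_f(s) \vee \overset{d}{A}_f(s)$ separately, using Theorem 2.2 as the bridge between closure membership and weak quasi-coincidence of weak $Q$-neighbourhoods.

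For the first inclusion, $A_f(s) \le \overline{A_f(s)}$ is immediate from the definition of closure as an infimum over closed fuzzy sequential sets containing $A_f(s)$. To show $\overset{d}{A}_f(s) \le \overline{A_f(s)}$ it suffices to show that every accumulation point $P_f(s)$ of $A_f(s)$ lies in $\overline{A_f(s)}$. By Definition 2.14 an accumulation point is in particular an adherence point, so every weak $Q$-nbd of $P_f(s)$ is weakly quasi-coincident with $A_f(s)$. Since every $Q$-nbd is, a fortiori, a weak $Q$-nbd (quasi-coincidence implies weak quasi-coincidence), the first implication of Theorem 2.2 gives $P_f(s)\in \overline{A_f(s)}$.

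For the reverse inclusion, I will argue pointwise: fix $x\in X$ and $n_0\in\mathbb{N}$, and show $\overline{A_f}^{n_0}(x)\le A_f^{n_0}(x)\vee \overset{d}{A}_f^{n_0}(x)$. If $\overline{A_f}^{n_0}(x)\le A_f^{n_0}(x)$ we are done, so suppose $A_f^{n_0}(x)<r_0\le \overline{A_f}^{n_0}(x)$ for some $r_0>0$. Form the simple fuzzy sequential point $P_f(s)=(p_{fx}^{n_0},r_0)$, i.e.\ with base $\{n_0\}$ and sequential grade vanishing outside $n_0$. Then $P_f(s)\le \overline{A_f(s)}$ since $r_0\le \overline{A_f}^{n_0}(x)$ and all other coordinates are zero; thus $P_f(s)\in \overline{A_f(s)}$. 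The second implication of Theorem 2.2 then ensures that every weak $Q$-nbd of $P_f(s)$ is weakly quasi-coincident with $A_f(s)$, so $P_f(s)$ is an adherence point. Since $r_0>A_f^{n_0}(x)$ we have $P_f(s)\not\le A_f(s)$, so $P_f(s)\notin A_f(s)$, and the conditional clause in Definition 2.14 is vacuous. Hence $P_f(s)$ is an accumulation point of $A_f(s)$, so $P_f(s)\le \overset{d}{A}_f(s)$, which yields $r_0\le \overset{d}{A}_f^{n_0}(x)$. Letting $r_0\nearrow \overline{A_f}^{n_0}(x)$ gives the desired pointwise inequality.

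The main obstacle is the reverse inclusion, specifically the construction used to reduce a global closure statement to the accumulation-point machinery: one must pick the auxiliary simple fuzzy sequential point so that (a) it lies in $\overline{A_f(s)}$, (b) it fails to be in $A_f(s)$ so that the awkward ``whenever $P_f(s)\in A_f(s)$'' clause in the definition of accumulation point is discharged trivially, and (c) letting its grade approach $\overline{A_f}^{n_0}(x)$ recovers the full pointwise bound. The other subtlety is remembering that ``$\in$'' for fuzzy sequential points is the pointwise $\le$ on the sequence of membership functions, so the pointwise argument at coordinate $n_0$ is legitimate and does produce the fuzzy sequential inequality $\overline{A_f(s)}\le A_f(s)\vee \overset{d}{A}_f(s)$.
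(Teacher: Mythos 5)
Your proof is correct and follows essentially the same route as the paper's: both directions rest on Theorem 2.2's identification of membership in $\overline{A_{f}(s)}$ with being an adherence point, together with the observation that an adherence point not belonging to $A_{f}(s)$ is automatically an accumulation point. The only difference is one of explicitness: where the paper simply asserts $\overline{A_{f}(s)}=\vee \Omega$ ($\Omega$ the collection of adherence points) and then splits cases, you supply the pointwise witness $(p_{fx}^{n_{0}},r_{0})$ with $r_{0}>A_{f}^{n_{0}}(x)$, which both justifies that assertion and discharges the ``whenever $P_{f}(s)\in A_{f}(s)$'' clause in one stroke.
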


\begin{proof}
Let $\Omega =\{P_{f}(s)$; $P_{f}(s)$ is an adherence point of $A_{f}(s)\}$.
Then $\overline{A_{f}(s)}=\vee \Omega $. Now let $P_{f}(s)\in \Omega $ then
two cases may arise, $P_{f}(s)\in A_{f}(s)$ or $P_{f}(s)\notin A_{f}(s)$. If 
$P_{f}(s)\notin A_{f}(s)$ then $P_{f}(s)\in \overset{d}{A}_{f}(s)$, hence $%
P_{f}(s)\in A_{f}(s)\vee \overset{d}{A}_{f}(s)$. Therefore ,%
\begin{equation*}
\overline{A_{f}(s)}=\vee \Omega \leq A_{f}(s)\vee \overset{d}{A}%
_{f}(s).-----------(1)\text{.}
\end{equation*}%
Again, $A_{f}(s)\leq \overline{A_{f}(s)}$ and since any accumulation point $%
P_{f}(s)$ of $A_{f}(s)$ belongs to $\overline{A_{f}(s)}$ which implies $%
\overset{d}{A}_{f}(s)\leq \overline{A_{f}(s)}$. Therefore ,%
\begin{equation*}
A_{f}(s)\vee \overset{d}{A}_{f}(s)\leq \overline{A_{f}(s)}-----------(2)%
\text{.}
\end{equation*}%
From (1) and (2) the result follows.
\end{proof}

\begin{corollary}
A fuzzy sequential set is closed in a FSTS $(X$, $\delta (s))$ if and only
if it contains all its accumulation points.
\end{corollary}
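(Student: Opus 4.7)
The plan is to deduce this corollary directly from Theorem 2.3 (just proven) together with Proposition 2.9(ii), which characterizes closedness as the fixed-point property $\overline{A_f(s)}=A_f(s)$. Since Theorem 2.3 gives $\overline{A_f(s)}=A_f(s)\vee \overset{d}{A}_f(s)$, closedness is equivalent to $A_f(s)\vee \overset{d}{A}_f(s)=A_f(s)$, which in turn is equivalent to the pointwise inequality $\overset{d}{A}_f(s)\leq A_f(s)$.

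First I would record the forward direction: assume $A_f(s)$ is closed, so by Proposition 2.9(ii) we have $\overline{A_f(s)}=A_f(s)$. For any accumulation point $P_f(s)$ of $A_f(s)$, by Definition 2.15 $P_f(s)$ is in particular an adherence point of $A_f(s)$, hence $P_f(s)\leq \overline{A_f(s)}=A_f(s)$, i.e.\ $P_f(s)\in A_f(s)$. Thus $A_f(s)$ contains all its accumulation points, and taking the supremum over all such points yields $\overset{d}{A}_f(s)\leq A_f(s)$.

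For the converse, assume $A_f(s)$ contains all its accumulation points, so by Definition 2.17 we have $\overset{d}{A}_f(s)\leq A_f(s)$. Then $A_f(s)\vee \overset{d}{A}_f(s)=A_f(s)$, and invoking Theorem 2.3 gives $\overline{A_f(s)}=A_f(s)$. By Proposition 2.9(ii), $A_f(s)$ is closed.

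There is no real obstacle here since both key ingredients have already been established; the only point to be careful about is the correct reading of ``contains all its accumulation points'' in the fuzzy sequential setting, namely that for every accumulation point $P_f(s)$ of $A_f(s)$ we have $P_f(s)\leq A_f(s)$, which by Definition 2.17 is exactly $\overset{d}{A}_f(s)\leq A_f(s)$. With that identification the proof reduces to a two-line application of the preceding theorem.
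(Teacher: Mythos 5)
Your proof is correct and is exactly the intended deduction: the paper dismisses this corollary as ``straightforward'' precisely because it follows from the preceding theorem $\overline{A_{f}(s)}=A_{f}(s)\vee \overset{d}{A}_{f}(s)$ together with the characterization of closedness as $\overline{A_{f}(s)}=A_{f}(s)$. Your identification of ``contains all its accumulation points'' with $\overset{d}{A}_{f}(s)\leq A_{f}(s)$ is the right reading, and both directions check out.
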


\begin{proof}
Proof is straightforward.
\end{proof}

\begin{remark}
The fuzzy derived sequential set of any fuzzy sequential set may not be
closed as shown by example $2.1$.
\end{remark}

\begin{example}
Let $X=\{a$, $b\}$, $\delta (s)$ be the FST having base $\beta
=\{X_{f}^{1}(s)\}\vee \{$ $X_{f}^{0}(s)\}\vee \{P_{f}(s)$, $G_{f}(s)\}$,
where $G_{f}^{n}(b)=1$ $\forall $ $n\in 
\mathbb{N}
$, $G_{f}^{n}(a)=0$ $\forall $ $n\in 
\mathbb{N}
$ and $P_{f}(s)=(p_{fa}^{M}$, $r)$, where $M=\{1$, $2$, $3\}$, $r_{1}=0.5$, $%
r_{2}=1$, $r_{3}=0.3$, $r_{n}=0$ $\forall $ $n\neq 1$, $2$, $3$. Here the
fuzzy derived sequential set of $(p_{fa}^{3}$, $0.3)$ is not closed.
\end{example}

\begin{proposition}
The fuzzy derived sequential set of a fuzzy sequential point equals the
union of the fuzzy derived sequential sets of all its simple reduced fuzzy
sequential points.
\end{proposition}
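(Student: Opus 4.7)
The plan is to prove the equality $\overset{d}{P}_f(s)=\bigvee_{n\in M}\overset{d}{(p_{fx}^n, r_n)}$ by showing the two inclusions separately, leaning on Proposition 2.10 (any reduced point of an accumulation point is an accumulation point) together with the observation that a simple reduced point $(p_{fx}^n, r_n)$ is non-zero only at the single support--index location $(x,n)$, so any weak quasi-coincidence with it is concentrated at the sequential point $(x,\{n\})$.

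For $\bigvee_{n\in M}\overset{d}{(p_{fx}^n, r_n)}\leq \overset{d}{P}_f(s)$, I will fix $n\in M$ and verify that every accumulation point $Q_f(s)=(q_{fy}^L, s)$ of $(p_{fx}^n, r_n)$ is also an accumulation point of $P_f(s)$. Adherence carries up automatically from $(p_{fx}^n, r_n)\leq P_f(s)$. For the additional accumulation requirement, needed when $Q_f(s)\in P_f(s)$, a short case analysis shows that $Q_f(s)\in P_f(s)$ together with $Q_f(s)$ being an accumulation (rather than merely adherence) point of $(p_{fx}^n, r_n)$ forces $y=x$ and $L\not\subseteq\{n\}$; the sequential point $(x,\{n\})$ then witnesses weak quasi-coincidence of any weak Q-nbd with $P_f(s)$ at a location distinct from $(x,L)$.

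For the reverse inclusion, I will take any accumulation point $Q_f(s)=(q_{fy}^L, s)$ of $P_f(s)$ and decompose it as $Q_f(s)=\bigvee_{k\in L}(q_{fy}^k, s_k)$. Each $(q_{fy}^k, s_k)$ is again an accumulation point of $P_f(s)$ by Proposition 2.10. It therefore suffices to show that every such simple accumulation point of $P_f(s)$ is an accumulation point of some single component $(p_{fx}^n, r_n)$ with $n\in M$: this yields $(q_{fy}^k, s_k)\leq\overset{d}{(p_{fx}^n, r_n)}\leq\bigvee_{n\in M}\overset{d}{(p_{fx}^n, r_n)}$, and joining over $k\in L$ completes the inclusion.

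The main obstacle is exactly this last step. Different weak Q-nbds of a simple accumulation point $(q_{fy}^k, s_k)$ may realize weak quasi-coincidence with $P_f(s)$ through different components $r_m$ of the sequential grade, so no index $n$ is handed to us uniformly by adherence alone. My intended strategy is to take $n=k$ when $k\in M$, deriving adherence of $(q_{fy}^k, s_k)$ to $(p_{fx}^k, r_k)$ from the structure forced on weak Q-nbds of a simple point and then verifying the different-sequential-point condition from the fact that any weak quasi-coincidence with $(p_{fx}^k, r_k)$ happens only at $(x,\{k\})$. The case $k\notin M$ and the degenerate case $y=x$ with $(q_{fy}^k, s_k)\leq(p_{fx}^k, r_k)$ will have to be treated separately, and these are the points most likely to expose whether the proposition depends on additional structural features of $\delta(s)$ or of $P_f(s)$ that the authors have implicitly relied on.
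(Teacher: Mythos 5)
The paper gives no proof of this proposition (it is ``omitted''), so there is nothing to compare your argument against; it has to be judged on its own merits. Your forward inclusion $\vee _{n\in M}\overset{d}{(p_{fx}^{n},r_{n})}\leq \overset{d}{P}_{f}(s)$ is sound: adherence passes up through $(p_{fx}^{n},r_{n})\leq P_{f}(s)$, and since $(p_{fx}^{n},r_{n})$ is non-zero only at the location $(x,\{n\})$, any weak quasi-coincidence with it is automatically a weak quasi-coincidence with $P_{f}(s)$ at the sequential point $(x,\{n\})$, which settles the extra accumulation clause exactly as you describe.

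The obstacle you flag in the reverse inclusion is a genuine gap, and it is fatal when the base $M$ is infinite. Take $X=\{a,b\}$ and the FST generated by the sets $B_{m}(s)$, $m\in \mathbb{N}$, where $B_{m}^{n}(a)=\frac{1}{4}$ if $n=m$, $B_{m}^{n}(a)=\frac{3}{4}$ if $n\neq m$, and $B_{m}^{n}(b)=\frac{1}{2}$ for all $n$. Let $P_{f}(s)=(p_{fa}^{\mathbb{N}},r)$ with $r_{n}=\frac{1}{2}$ for all $n$, and $Q_{f}(s)=(q_{fb}^{1},1)$. Every open set other than $X_{f}^{0}(s)$ and $X_{f}^{1}(s)$ is a union of finite meets of the $B_{m}(s)$, and any such finite meet equals $\frac{3}{4}$ at $a$ in all but finitely many components; hence every weak $Q$-nbd of $Q_{f}(s)$ is weakly quasi-coincident with $P_{f}(s)$, and since $Q_{f}(s)\notin P_{f}(s)$ it is an accumulation point of $P_{f}(s)$, so $\overset{d}{P}_{f}(s)$ takes the value $1$ at $b$ in the first component. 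But for each $n$, $B_{n}(s)$ is a weak $Q$-nbd of $Q_{f}(s)$ with $B_{n}^{n}(a)+\frac{1}{2}\leq 1$, so $Q_{f}(s)$ is not even an adherence point of $(p_{fa}^{n},\frac{1}{2})$; the same test shows that no accumulation point of $(p_{fa}^{n},\frac{1}{2})$ supported at $b$ can exceed $\frac{1}{2}$ in any component, so $\vee _{n}\overset{d}{(p_{fa}^{n},r_{n})}$ is at most $\frac{1}{2}$ there. Thus ``take $n=k$'' cannot work, no other choice of $n$ works either, and in fact the stated equality itself fails here under the plain reading of the definitions. When $M$ is finite the step can be repaired: if for every $n\in M$ some weak $Q$-nbd $N_{fn}(s)$ of the simple point $(q_{fy}^{k},s_{k})$ failed to be weakly quasi-coincident with $(p_{fx}^{n},r_{n})$, then the finite meet of the witnessing open sets would again be weakly quasi-coincident with $(q_{fy}^{k},s_{k})$ (simplicity of the point is what guarantees this) yet would miss $P_{f}(s)$ entirely, contradicting adherence. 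So your strategy, suitably adjusted, proves the proposition for fuzzy sequential points with finite base, but for infinite base an additional hypothesis --- or a different reading of the definitions --- is required, which confirms your own suspicion about hidden structural assumptions.
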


\begin{proof}
The proof is omitted.
\end{proof}

\begin{proposition}
If the fuzzy derived sequential set of each of the simple reduced fuzzy
sequential points of a fuzzy sequential point is closed, then the derived
sequential set of the fuzzy sequential point is closed.
\end{proposition}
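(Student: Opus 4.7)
The plan is to reduce the statement to Proposition~2.11 and then exploit the coordinate-by-coordinate structure of the simple reduced fuzzy sequential points. Write $P_f(s)=(p_{fx}^{M},r)$ and, for each $n\in M$, let $P^{(n)}_f(s)=(p_{fx}^{n},r_n)$ denote its simple reduced fuzzy sequential point at index $n$. By Proposition~2.11,
\[
\overset{d}{P}_f(s)\;=\;\bigvee_{n\in M}\overset{d}{P^{(n)}}_f(s),
\]
and by hypothesis each $\overset{d}{P^{(n)}}_f(s)$ is closed. The goal becomes: show that this join is closed.

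My first step would be to observe that $P^{(n)}_f(s)$ is $\overline{0}$ at every coordinate $k\neq n$. Unwinding the definition of weak quasi-coincidence, a fuzzy sequential set can be weakly quasi-coincident with $P^{(n)}_f(s)$ only through its $n$th coordinate, so the set of accumulation points of $P^{(n)}_f(s)$ is concentrated at coordinate $n$; in particular the derived sets $\overset{d}{P^{(n)}}_f(s)$ decouple across distinct $n$, and the join above can be built coordinatewise from them.

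Next I would apply Corollary~2.2, so that proving $\overset{d}{P}_f(s)$ is closed amounts to showing it contains each of its accumulation points. Let $Q_f(s)$ be any accumulation point of $\overset{d}{P}_f(s)$. Using the coordinate-localisation above I would argue that $Q_f(s)$ is forced to be an accumulation point of some particular $\overset{d}{P^{(n)}}_f(s)$, because each weak $Q$-nbd of $Q_f(s)$ that is weakly quasi-coincident with the join must witness this coincidence at an index~$n$ where the join has support, and at that index it is in fact weakly quasi-coincident with $\overset{d}{P^{(n)}}_f(s)$. Since $\overset{d}{P^{(n)}}_f(s)$ is closed, it contains all of its accumulation points by Corollary~2.2, so $Q_f(s)\le\overset{d}{P^{(n)}}_f(s)\le\overset{d}{P}_f(s)$, which gives closedness.

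The main obstacle I expect is exactly this coordinate-localisation step: showing rigorously that an accumulation point of the join $\bigvee_{n\in M}\overset{d}{P^{(n)}}_f(s)$ is actually an accumulation point of one of the summands, rather than an accumulation point that only arises ``in the limit'' across many indices. The danger is that a general countable union of closed fuzzy sequential sets need not be closed, so the argument must use in an essential way the fact that each $\overset{d}{P^{(n)}}_f(s)$ lives at a distinct coordinate. Handling the ``different base or support'' clause in Definition~2.10 carefully, and checking the effect of reduced points (Proposition~2.10), will be the delicate bookkeeping needed to make the localisation precise.
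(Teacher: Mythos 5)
Your frame---decompose $\overset{d}{P}_{f}(s)$ as $\vee_{n\in M}\overset{d}{P^{(n)}}_{f}(s)$ via Proposition 2.11 and then show the join contains its accumulation points (that criterion is Corollary 2.4, not 2.2)---matches the paper's starting point, but the step you yourself flag as the main obstacle is a genuine gap, and the structural fact you propose to close it with is false. Weak quasi-coincidence with the simple point $P^{(n)}_{f}(s)$ can indeed only occur through coordinate $n$, but that constrains the weak $Q$-neighbourhoods of an accumulation point of $P^{(n)}_{f}(s)$, not the accumulation point itself: $\overset{d}{P^{(n)}}_{f}(s)$ may be nonzero at coordinates other than $n$ and at supports other than $x$. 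The paper's own Example 2.1 exhibits this: $(p_{fa}^{1},0.5)$ is an accumulation point of the simple point $(p_{fa}^{3},0.3)$ there (its only open weak $Q$-nbd is $X_{f}^{1}(s)$), so the derived set of that simple point is not ``concentrated at coordinate $3$.'' Hence the summands do not decouple across $n$, and your localisation reduces to the bare content of Proposition 2.6: each weak $Q$-nbd of $Q_{f}(s)$ is weakly quasi-coincident with \emph{some} $\overset{d}{P^{(n)}}_{f}(s)$, where $n$ may vary with the neighbourhood. For infinite $M$ nothing produces a single $n$ serving all neighbourhoods---exactly the infinite-union failure you warn about but do not avert.

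The paper escapes this by localising with respect to the original point $P_{f}(s)$ rather than with respect to the derived sets of its pieces. If $Q_{f}(s)\notin\overset{d}{P}_{f}(s)$, then $Q_{f}(s)$ is not an accumulation point of $P_{f}(s)$, which yields a \emph{single} weak $Q$-nbd $B_{f}(s)$ of $Q_{f}(s)$ with $B_{f}(s)\overline{q}_{w}P_{f}(s)$; since weak quasi-coincidence with $P_{f}(s)$ at a coordinate $n\in M$ is precisely weak quasi-coincidence with $(p_{fx}^{n},r_{n})$, this one neighbourhood simultaneously witnesses that $Q_{f}(s)$ is not an accumulation point of any $(p_{fx}^{n},r_{n})$, i.e.\ $Q_{f}(s)\notin\overset{d}{P^{(n)}}_{f}(s)$ for every $n$ at once, with no pigeonhole over $n$. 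Only then is the closedness hypothesis invoked, to pass from $Q_{f}(s)\notin\overset{d}{P^{(n)}}_{f}(s)$ to ``$Q_{f}(s)$ is not an accumulation point of $\overset{d}{P^{(n)}}_{f}(s)$'' for each $n$, and hence not of the join. Your plan never returns to $P_{f}(s)$ itself, and without that move the localisation you need does not go through.
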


\begin{proof}
Let $A_{f}(s)=(p_{fx}^{M}$, $r)$ be a fuzzy sequential point. Let $D_{f}(s)$
be the fuzzy derived sequential set of $A_{f}(s)$. Let $D_{nf}(s)$ be the
fuzzy derived sequential set of $A_{nf}(s)=(p_{fx}^{n}$, $r_{n})$, $n\in M$.
Suppose $D_{nf}(s)$ is closed for all $n\in M$. Let $P_{f}(s)$ be an
accumulation point of $D_{f}(s)$.\newline
Now, $P_{f}(s)\notin D_{f}(s)$\newline
$\Longrightarrow P_{f}(s)$ is not an accumulation point of $(p_{fx}^{M}$, $%
r) $\newline
$\Longrightarrow \exists $ a weak Q-nbd $B_{f}(s)$ of $P_{f}(s)$ which is
not weakly quasi coincident with $(p_{fx}^{M}$, $r)$\newline
$\Longrightarrow B_{f}(s)$ is not weakly quasi coincident with $(p_{fx}^{n}$%
, $r_{n})$ $\forall $ $n\in M$.\newline
$\Longrightarrow P_{f}(s)\notin D_{nf}(s)$ $\forall $ $n\in M$\newline
$\Longrightarrow P_{f}(s)$ is not an accumulation point of $D_{nf}(s)$ $%
\forall $ $n\in M$ (since $D_{nf}(s)$ is closed $\forall $ $n\in M$)\newline
$\Longrightarrow P_{f}(s)$ is not an accumulation point of $\vee _{n\in
M}D_{nf}(s)=D_{f}(s)$, a contradiction. Hence proved.
\end{proof}

\begin{remark}
Converse of proposition $2.12$ is not true as shown by example $2.2$.
\end{remark}

\begin{example}
Let $X=\{a$, $b\}$, $\delta (s)$ be the FST having base $\beta
=\{X_{f}^{1}(s)\}\vee \{$ $X_{f}^{0}(s)\}\vee \{P_{f}(s)$, $G_{f}(s)\}$,
where $G_{f}^{n}(b)=1$ $\forall $ $n\in 
\mathbb{N}
$, $G_{f}^{n}(a)=0$ $\forall $ $n\in 
\mathbb{N}
$ and $P_{f}(s)=(p_{fa}^{M}$, $r)$, where $M=\{1$, $2$, $3\}$, $r_{1}=0.5$, $%
r_{2}=1$, $r_{3}=0.3$, $r_{n}=0$ $\forall $ $n\neq 1$, $2$, $3$. Here the
fuzzy derived sequential set of $P_{f}(s)$ is closed but the fuzzy derived
sequential set of $(p_{fa}^{3}$, $0.3)$ is not closed.
\end{example}

\begin{lemma}
Let $A_{f}(s)=(p_{fx}^{k}$, $r)$ be a fuzzy sequential point in FSTS $(X$, $%
\delta (s))$. Then,\newline
(i) For $y\neq x$, $\overline{A_{f}(s)}(y)=A_{f}^{d}(s)(y)$.\newline
(ii) If $\overline{A_{f}(s)}(x)>_{P}r$, $\overline{A_{f}(s)}%
(x)=_{P}A_{f}^{d}(s)(x)$, where $P\subset M$.\newline
(iii) If $\overline{A_{f}(s)}(x)>_{M}r$, $\overline{A_{f}(s)}%
(x)=A_{f}^{d}(s)(x)$.\newline
(iv) If $A_{f}^{d}(s)(x)=0=$sequence of real zeros, then $\overline{A_{f}(s)}%
(x)=r$.\newline
(v) If $A_{f}(s)$ is simple then converse of (iv) is true.
\end{lemma}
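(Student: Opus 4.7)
The plan is to reduce everything to the identity $\overline{A_{f}(s)}=A_{f}(s)\vee A_{f}^{d}(s)$ of Theorem 2.3, read pointwise and coordinate-wise. Since $A_{f}(s)=(p_{fx}^{M},r)$ is a fuzzy sequential point supported at $x$, its value at any other point is the zero sequence, and this forces the join with $A_{f}^{d}(s)$ to degenerate in a controlled way.

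For (i), at $y\neq x$ we have $A_{f}^{n}(y)=0$ for every $n$, so $\overline{A_{f}(s)}(y)_{n}=\max\{0,A_{f}^{d}(s)(y)_{n}\}=A_{f}^{d}(s)(y)_{n}$. For (ii) and (iii), evaluate at the support $x$, where $A_{f}^{n}(x)=r_{n}$, giving $\overline{A_{f}(s)}(x)_{n}=\max\{r_{n},A_{f}^{d}(s)(x)_{n}\}$; whenever this maximum strictly exceeds $r_{n}$ it must be attained by the derived-set component, which supplies the required equality on $P$, and specializes to $P=M$ for (iii). For (iv), if $A_{f}^{d}(s)(x)$ is the zero sequence, the maximum collapses to $r_{n}$ coordinate by coordinate, so $\overline{A_{f}(s)}(x)=r$.

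The interesting step is (v). Assume $A_{f}(s)$ is simple, so $M=\{k\}$ with $r_{k}>0$, and assume $\overline{A_{f}(s)}(x)=r$. I would argue by contradiction: if $A_{f}^{d}(s)(x)\neq 0$, then by Definition 2.14 there exists an accumulation point $Q_{f}(s)=(p_{fx}^{N},t)$ of $A_{f}(s)$ with some $t_{n}>0$. Since $Q_{f}(s)\leq \overline{A_{f}(s)}$ and $\overline{A_{f}(s)}(x)_{n}=r_{n}=0$ for $n\neq k$, the base $N$ must be contained in $\{k\}$, and then $t_{k}\leq r_{k}$ gives $Q_{f}(s)\leq A_{f}(s)$, i.e., $Q_{f}(s)\in A_{f}(s)$. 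The second clause of Definition 2.13 now demands that every weak $Q$-nbd of $Q_{f}(s)$ be weakly quasi-coincident with $A_{f}(s)$ at some sequential point $(y,L)$ whose support or base differs from $(x,\{k\})$. But $A_{f}(s)$ is nonzero only at the pair $(x,k)$, so such weak quasi-coincidence at any other support, or at $(x,L)$ with $L\neq\{k\}$, is impossible. This contradiction forces $A_{f}^{d}(s)(x)=0$.

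The main obstacle is this last step: one has to navigate the two clauses of Definition 2.13, in particular the exceptional clause triggered by $Q_{f}(s)\in A_{f}(s)$, and verify that the simplicity hypothesis on $A_{f}(s)$ really kills every candidate supporting a nonzero derived value at $x$. Parts (i)--(iv) are essentially bookkeeping on top of Theorem 2.3, but (v) genuinely uses the narrow structure of a simple fuzzy sequential point and explains why the hypothesis of simplicity cannot be dropped in the converse direction.
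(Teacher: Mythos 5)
Your proposal is correct, and in fact the paper states this lemma without any proof at all, so your argument fills a genuine gap rather than duplicating one; the route you take --- reading Theorem 2.3, $\overline{A_{f}(s)}=A_{f}(s)\vee \overset{d}{A}_{f}(s)$, pointwise and coordinatewise --- is clearly the intended one, since parts (i)--(iv) are exactly the componentwise max $\max\{r_{n},A_{f}^{d}(s)(x)_{n}\}$ specialised at $y\neq x$, at indices where the closure strictly exceeds $r$, and at the case where the derived value vanishes. For (v), your reduction is the right one: the constraint $\overline{A_{f}(s)}(x)=r$ forces any accumulation point supported at $x$ to have base $\{k\}$ and grade at most $r_{k}$, hence to belong to $A_{f}(s)$, and then the second clause of the definition of accumulation point cannot be met because $A_{f}(s)$ admits quasi-coincidence only at the single location $(x,k)$; the only points worth making explicit are that a weak $Q$-neighbourhood of such a candidate always exists (take $X_{f}^{1}(s)$, which is open and satisfies $1>1-t_{k}$), so the contradiction is not vacuous, and that ``weakly quasi-coincident at a sequential point having different base or support'' must be read as requiring an actual coincidence index-pair $(y,n)\neq(x,k)$ --- the reading the paper itself uses in the proof of Theorem 2.5 --- rather than merely a set $L\neq\{k\}$ containing $k$.
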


\begin{lemma}
Let $A_{f}(s)=(p_{fx}^{k}$, $r_{k})$ be a simple fuzzy sequential point in
FSTS $(X$, $\delta (s))$. Then,\newline
(i) If $A_{f}^{d}(s)(x)$ is a non zero sequence, then $\overline{A_{f}(s)}%
=A_{f}^{d}(s)$.\newline
(ii) If $A_{f}^{d}(s)(x)=0=$sequence of real zeros, then $A_{f}^{d}(s)$ is
closed iff $\exists $ an open fuzzy sequential set $B_{f}^{@}(s)$ such that $%
B_{f}^{@}(s)(x)=1$ and for $y\neq x$, $B_{f}^{@}(s)(y)=\{\overline{A_{f}(s)}%
\}^{c}(y)=\{A_{f}^{d}(s)\}^{c}(y)$.\newline
(iii) $A_{f}^{d}(s)(x)=0=$sequence of real zeros iff $\exists $ an open
fuzzy sequential set $B_{f}(s)$ such that $B_{f}(s)(x)=1-r$ $where$ $%
r=\{r_{n}\}_{n}$ and $r_{n}=0$ if $n\neq k$, $r_{n}=r_{k\text{ }}$ if n=k.
\end{lemma}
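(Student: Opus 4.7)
The plan is to invoke Theorem 2.3, which yields the decomposition $\overline{A_{f}(s)}=A_{f}(s)\vee A_{f}^{d}(s)$, and then to read off each of (i)--(iii) by comparing the two summands pointwise with the aid of Lemma 2.1. Throughout, the simplicity of $A_{f}(s)$ (base $\{k\}$) means that $A_{f}(s)$ vanishes outside the pair $(x,k)$, so Lemma 2.1(i) already equates $\overline{A_{f}(s)}$ with $A_{f}^{d}(s)$ at every $y\neq x$; the entire content of each of (i)--(iii) therefore reduces to a statement at the support $x$.

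For (i), the hypothesis that $A_{f}^{d}(s)(x)$ is a non-zero sequence, combined with the contrapositive of Lemma 2.1(v), forces $\overline{A_{f}(s)}(x)\neq r$. Since $\overline{A_{f}(s)}\geq A_{f}(s)$ componentwise, this strict inequality together with Lemma 2.1(iii) upgrades to $\overline{A_{f}(s)}(x)=A_{f}^{d}(s)(x)$; gluing with the off-support identification yields $\overline{A_{f}(s)}=A_{f}^{d}(s)$ globally. For (ii), the assumption $A_{f}^{d}(s)(x)=0$ together with Lemma 2.1(iv) gives $\overline{A_{f}(s)}(x)=r$. In the forward direction, if $A_{f}^{d}(s)$ is closed, set $B_{f}^{@}(s):=(A_{f}^{d}(s))^{c}$; this is open, takes value $1$ at $x$ (since $A_{f}^{d}(s)(x)=0$), and matches the prescribed identity off $x$ by the definition of complement together with Lemma 2.1(i). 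Conversely, the $B_{f}^{@}(s)$ supplied by the hypothesis coincides pointwise with $(A_{f}^{d}(s))^{c}$ (at $x$ both equal $1$, off $x$ by the given formula), so its openness is exactly the closedness of $A_{f}^{d}(s)$.

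For (iii), the forward implication again uses $A_{f}^{d}(s)(x)=0$ and Lemma 2.1(iv) to obtain $\overline{A_{f}(s)}(x)=r$; Proposition 2.9(xiii), which states $(\overline{A_{f}(s)})^{c}=(A_{f}^{c}(s))^{o}$, then produces the open set $B_{f}(s):=(A_{f}^{c}(s))^{o}$ with $B_{f}(s)(x)=1-r$. Conversely, given an open $B_{f}(s)$ with $B_{f}(s)(x)=1-r$, its complement $(B_{f}(s))^{c}$ is closed, satisfies $(B_{f}(s))^{c}(x)=r=A_{f}(s)(x)$, and is nonnegative elsewhere, so it dominates $A_{f}(s)$. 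Minimality of the closure then gives $\overline{A_{f}(s)}(x)\leq r$, and combining with $\overline{A_{f}(s)}\geq A_{f}(s)$ yields $\overline{A_{f}(s)}(x)=r$; Lemma 2.1(v) finally delivers $A_{f}^{d}(s)(x)=0$.

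The main obstacle I anticipate is in (i): the bare phrase \emph{``$A_{f}^{d}(s)(x)$ is a non-zero sequence''} only places the non-zero entry at some index $n$, which need not be the base index $k$, whereas Lemma 2.1(iii) requires $\overline{A_{f}(s)}(x)>_{\{k\}} r$ in order to upgrade equality globally. Bridging this gap will require a short side argument exploiting that $k$ is the unique index at which $A_{f}(s)$ carries nontrivial mass, so every weak $Q$-neighbourhood witnessing adherence or accumulation must feel the $k$-th component of $A_{f}(s)$. Apart from this step, (ii) and (iii) are essentially bookkeeping with the complement/interior identities of Proposition 2.9 and parts (iv)--(v) of Lemma 2.1.
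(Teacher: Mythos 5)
The paper states this lemma with no proof at all, so there is no in-house argument to compare against; your proposal has to stand on its own. Parts (ii) and (iii) do stand. In (ii) the identification $B_{f}^{@}(s)=(A_{f}^{d}(s))^{c}$ in both directions is correct (the value $1$ at $x$ comes from $A_{f}^{d}(s)(x)=0$, the off-support formula from Lemma~2.1(i)), and in (iii) the forward direction via Proposition~2.9(xiii) and the converse via ``$(B_{f}(s))^{c}$ is a closed set dominating $A_{f}(s)$, hence $\overline{A_{f}(s)}(x)=r$, hence Lemma~2.1(v) applies'' are both sound. These parts use only Lemma~2.1(i),(iv),(v) and standard closure/interior identities, all legitimately.

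Part (i), however, is not proved, and the hole you flag at the end is not a side issue but the entire content of the statement. By Theorem~2.3, $\overline{A_{f}(s)}(x)=r\vee A_{f}^{d}(s)(x)$, so (i) is exactly the claim $A_{f}^{d}(s)(x)\geq_{\{k\}}r$, i.e.\ that the derived set already carries grade at least $r_{k}$ at the base index $k$. The hypothesis only gives a nonzero entry of $A_{f}^{d}(s)(x)$ at \emph{some} index, the contrapositive of Lemma~2.1(v) only gives $\overline{A_{f}(s)}(x)\neq r$, and Lemma~2.1(iii) cannot be invoked because its hypothesis $\overline{A_{f}(s)}(x)>_{M}r$ is precisely what is missing. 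To close the gap, take an accumulation point $Q_{f}(s)=(q_{fx}^{L},s)$ of $A_{f}(s)$ witnessing $A_{f}^{d}(s)(x)\neq 0$; since $A_{f}(s)$ has mass only at the pair $(x,k)$, weak quasi-coincidence with $A_{f}(s)$ can only be witnessed there, so every open $B_{f}(s)$ with $B_{f}(s)q_{w}Q_{f}(s)$ must satisfy $B_{f}^{k}(x)>1-r_{k}$. Then the point with support $x$, base $L\cup\{k\}$, grade $r_{k}$ at $k$ and $s_{n}$ at $n\in L\setminus\{k\}$ does not belong to $A_{f}(s)$ and every weak $Q$-nbd of it is weakly quasi-coincident with $A_{f}(s)$, so it is an accumulation point and $A_{f}^{d}(s)(x)\geq_{\{k\}}r$ follows. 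Some such argument must be written out; as submitted, (i) is an assertion, not a proof.
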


\begin{theorem}
The fuzzy derived sequential set of each fuzzy sequential set is closed iff
the fuzzy derived sequential set of each simple fuzzy sequential point is
closed.
\end{theorem}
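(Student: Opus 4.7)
The forward implication is immediate: a simple fuzzy sequential point is itself a fuzzy sequential set, so the assumption that every derived fuzzy sequential set is closed specializes to the assertion that every derived set of a simple fuzzy sequential point is closed.

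For the converse I would work in two stages. First, I lift the hypothesis from simple fuzzy sequential points to arbitrary fuzzy sequential points via Propositions 2.11 and 2.12: given any fuzzy sequential point $P_f(s) = (p_{fx}^M, r)$, each of its simple reduced fuzzy sequential points $(p_{fx}^n, r_n)$ is itself a simple fuzzy sequential point whose derived set is closed by assumption, so Proposition 2.12 forces $\overset{d}{P}_f(s)$ to be closed. Second, I decompose an arbitrary fuzzy sequential set $A_f(s)$ as the supremum $A_f(s) = \vee_{P_f(s) \leq A_f(s)} P_f(s)$ of the fuzzy sequential points it contains and try to establish the formula $\overset{d}{A}_f(s) = \vee_{P_f(s) \leq A_f(s)} \overset{d}{P}_f(s)$: the $\geq$ inclusion is monotonicity of the accumulation-point relation, while for $\leq$ I would take an accumulation point $Q_f(s)$ of $A_f(s)$ and use Proposition 2.6 to distribute weak quasi-coincidence of each weak $Q$-nbd of $Q_f(s)$ across the supremum, aiming to pin down a single $P_f(s) \leq A_f(s)$ realizing $Q_f(s)$ as an accumulation point. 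Once the formula is in hand, the closure of $\overset{d}{A}_f(s)$ follows by replaying the contradiction-skeleton of Proposition 2.12: were $Q_f(s)$ an accumulation point of $\overset{d}{A}_f(s)$ lying outside it, then $Q_f(s)$ would fail to be an accumulation point of any constituent $P_f(s)$, hence of any closed $\overset{d}{P}_f(s)$, and therefore of their supremum, yielding the required contradiction.

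The main obstacle is the $\leq$ inclusion of the decomposition formula. Proposition 2.6 yields, for each weak $Q$-nbd of $Q_f(s)$, some $P_f(s)$ in the decomposition with which the nbd is weakly quasi-coincident, but this choice a priori depends on the weak $Q$-nbd; pinning down a uniform $P_f(s)$ that works for every weak $Q$-nbd of $Q_f(s)$ is the nontrivial part. I expect this to be resolvable by exploiting the countability of the base $M \subseteq \mathbb{N}$ of a fuzzy sequential point together with the refined structural information about adherence recorded in Lemmas 2.1 and 2.2; failing that, the cleanest alternative is to bypass the decomposition formula altogether and argue closedness of $\overset{d}{A}_f(s)$ directly, contradicting a hypothetical exterior accumulation point $Q_f(s)$ by producing a weak $Q$-nbd that fails against $A_f(s)$ and then propagating, simple point by simple point through the hypothesis, to a failure against $\overset{d}{A}_f(s)$, thereby replicating Proposition 2.12's mechanism at the level of $A_f(s)$.
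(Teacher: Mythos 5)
Your forward direction and your first stage (lifting closedness of derived sets from simple fuzzy sequential points to arbitrary fuzzy sequential points via Propositions 2.11 and 2.12) are fine, but the second stage rests on a decomposition formula that is false in general, and you yourself flag the inclusion you cannot prove. The identity $\overset{d}{A}_{f}(s)=\vee _{P_{f}(s)\leq A_{f}(s)}\overset{d}{P}_{f}(s)$ is exactly the assertion that the derived-set operator distributes over an (infinite) union, and the $\leq $ direction fails already for crisp sets: take $X=\mathbb{R}$ with the FST $\delta ^{\mathbb{N}}$ coming from the characteristic functions of the usual open sets, and let $A_{f}(s)$ be the constant sequence of the characteristic function of $\{1/n;\ n\in \mathbb{N}\}$. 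The point supported at $0$ is an accumulation point of $A_{f}(s)$, but every fuzzy sequential point contained in $A_{f}(s)$ is supported at some single $1/n$ and has (essentially) empty derived set, so the right-hand supremum misses it. This is precisely the phenomenon you identify: Proposition 2.6 hands you a constituent $P_{f}(s)$ depending on the weak $Q$-nbd, and as the nbds shrink the required constituent changes, so no uniform $P_{f}(s)$ exists. Your fallback ("argue closedness of $\overset{d}{A}_{f}(s)$ directly by replicating Proposition 2.12's mechanism") is not an argument; the final step of that mechanism ("not an accumulation point of any constituent, hence not of the supremum") is the same false distributivity in disguise.

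The paper's proof avoids any decomposition of $A_{f}(s)$ into points. It takes a \emph{simple} accumulation point $P_{f}(s)=(p_{fx}^{k},r_{k})$ of $D_{f}(s)=H_{f}^{d}(s)$ (which suffices by Proposition 2.10), observes $P_{f}(s)\in \overline{H_{f}^{d}(s)}\leq \overline{\overline{H_{f}(s)}}=\overline{H_{f}(s)}$ so that $P_{f}(s)$ adheres to $H_{f}(s)$, and disposes immediately of the case $P_{f}(s)\notin H_{f}(s)$. The real work is the case $P_{f}(s)\in H_{f}(s)$: there one forms the auxiliary simple point $A_{f}(s)=(p_{fx}^{k},\rho _{k})$ with $\rho _{k}=H_{f}^{k}(x)$ and uses the structural Lemmas 2.1 and 2.2 (on closures and derived sets of simple points, and the existence of the special open sets $B_{f}^{@}(s)$ and $B_{f}^{\prime }(s)$) to manufacture, from an arbitrary weak $Q$-nbd $B_{f}(s)$ of $P_{f}(s)$, a refined weak $Q$-nbd that forces weak quasi-coincidence with $H_{f}(s)$ at a sequential point of different base or support. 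None of that machinery appears in your plan, so the converse direction remains unproved.
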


\begin{proof}
The necessity is obvious. Conversely, suppose $H_{f}(s)$ is a fuzzy
sequential set. We will show that $H_{f}^{d}(s)=D_{f}(s)$ is closed. Let $%
P_{f}(s)=(p_{fx}^{k}$, $r_{k})$ be an accumulation point of $D_{f}(s)$. It
is sufficient to show that $P_{f}(s)\in D_{f}(s)$. Let $r=\{r_{n}\}_{n}$
where $r_{n}=r_{k}$ for $n=k$ and $r_{n}=0$ $\forall $ $n\neq k$. Now $%
P_{f}(s)\in \overline{D_{f}(s)}=\overline{H_{f}^{d}(s)}\leq \overline{%
\overline{H_{f}(s)}}=\overline{H_{f}(s)}$. Therefore $P_{f}(s)$ is an
adherence point of $H_{f}(s)$. If $P_{f}(s)\notin H_{f}(s)$, then $P_{f}(s)$
is an accumulation point of $H_{f}(s)$, that is $P_{f}(s)\in D_{f}(s)$ and
we are done.\newline
Let us assume $P_{f}(s)\in H_{f}(s)$

$\ \ \ \Longrightarrow r\leq H_{f}(s)(x)=\rho $ (say)

\ \ \ \ $\Longrightarrow r_{k}\leq H_{f}^{k}(x)=\rho _{k}$\newline
Now consider the simple fuzzy sequential point $A_{f}(s)=(p_{fx}^{k}$, $\rho
_{k})$. Let $\rho ^{\prime }=\{\rho _{n}^{\prime }\}_{n}$ where $\rho
_{k}^{\prime }=\rho _{k}$ and $\rho _{n}^{\prime }=0$ $\forall $ $n\neq k$.
There are two possibilities \ concerning $A_{f}^{d}(s)$.\newline
Case I. $A_{f}^{d}(s)(x)=\rho _{1}$ is a non zero sequence. Now $\overline{%
A_{f}(s)}(x)\geq A_{f}(s)(x)=\rho ^{\prime }$\newline
By lemma $2.1(v)$, $\overline{A_{f}(s)}(x)>\rho ^{\prime }$

$\ \ \ \ \ \ \Longrightarrow A_{f}^{d}(s)(x)=\overline{A_{f}(s)}(x)>\rho
^{\prime }$

\ \ \ \ \ \ \ $\Longrightarrow \rho _{1}>\rho ^{\prime }$

\ \ \ \ \ \ \ $\Longrightarrow \rho _{1k}>\rho
_{k}=A_{f}^{k}(x)=H_{f}^{k}(x) $\newline
Hence the simple fuzzy sequential point $Q_{f}(s)=(p_{fx}^{k}$, $\rho
_{1k})\notin H_{f}(s)$. but since $Q_{f}(s)\in A_{f}^{d}(s)\leq \overline{%
A_{f}(s)}\leq \overline{H_{f}(s)}$, $Q_{f}(s)$ is an accumulation point of $%
H_{f}(s)$, that is $Q_{f}(s)\in D_{f}(s)$. Moreover $r_{k}\leq \rho
_{k}<\rho _{1k}$

$\ \ \ \ \ \ \ \ \ \ \ \ \ \ \ \ \ \ \ \ \ \ \ \ \ \ \ \ \ \ \Longrightarrow 
$ $r_{k}<\rho _{1k}$

$\ \ \ \ \ \ \ \ \ \ \ \ \ \ \ \ \ \ \ \ \ \ \ \ \ \ \ \ \ \ \Longrightarrow
P_{f}(s)\in D_{f}(s)$.\newline
Case II. $A_{f}^{d}(s)(x)=0$. Let $B_{f}(s)$ be an arbitrary weak Q-nbd of $%
A_{f}(s)$ and hence of $P_{f}(s)$. In view of lemma $2.2(ii)$, $\exists $ an
open fuzzy sequential set $B_{f}^{@}(s)$ such that $B_{f}^{@}(s)(x)=1$ and
for $y\neq x$, $B_{f}^{@}(s)(y)=\{\overline{A_{f}(s)}\}^{c}(y)$. Let $%
C_{f}(s)=B_{f}(s)\wedge B_{f}^{@}(s)$. Then $C_{f}(s)(x)=B_{f}(s)(x)$ which
implies $C_{f}^{k}(x)=B_{f}^{k}(x)>1-r_{k}$. Thus $C_{f}(s)$ is a weak Q-nbd
of $P_{f}(s)$. Hence $C_{f}(s)$ and $D_{f}(s)$ are weakly quasi coincident,
that is $\exists $ a point $z$ and $n\in 
\mathbb{N}
$ such that $D_{f}^{n}(z)+C_{f}^{n}(z)>1$. Owing to the fact that $D_{f}(s)$
is the union of all the accumulation points of $H_{f}(s)$, $\exists $ an
accumulation point $P_{f}^{\prime }(s)=(p_{fz}^{n}$, $\mu _{n})$ such that $%
\mu _{n}+C_{f}^{n}(z)>1$. Therefore $C_{f}(s)$ is a weak Q-nbd of $%
P_{f}^{\prime }(s)$. Let $\mu =\{\mu _{n}\}_{n}$ where $\mu _{n}\neq 0$ and $%
\mu _{m}=0$ $\forall $ $m\neq n$. The proof will be carried out, according
to the following subcases:\newline
Subcase I. When n=k.\newline
(a) when $z=x$ and $\mu \leq \rho ^{\prime }$, then $P_{f}^{\prime }(s)\in
H_{f}(s)$. Since $P_{f}^{\prime }(s)$ is an accumulation point of $H_{f}(s)$%
, every weak Q-nbd of $P_{f}^{\prime }(s)$ (and hence $B_{f}(s)$) and $%
H_{f}(s)$ are weakly quasi coincident at some point having different base or
different support than that of $P_{f}(s)$.\newline
(b) When $z=x$ and $\mu >\rho ^{\prime }$, then $P_{f}^{\prime }(s)\notin
H_{f}(s)$. From lemma $2.2(iii)$, $\exists $ an open fuzzy sequential set $%
B_{f}^{\prime }(s)$ such that $B_{f}^{\prime }(s)(x)=1-\rho ^{\prime }>1-\mu 
$. Therefore $G_{f}(s)=C_{f}(s)\wedge B_{f}^{\prime }(s)$ is also a weak
Q-nbd of $P_{f}^{\prime }(s)$. Hence $G_{f}(s)$ and $H_{f}(s)$ are weakly
quasi coincident. Since $G_{f}(s)(x)\leq B_{f}^{\prime }(s)(x)=1-\rho
^{\prime }$

$\ \ \ \ \ \ \ \ \ \ \ \ \ \ \ \ \ \ \ \ \ \ \ \ \ \ \ \ \ \ \ \ \
\Longrightarrow G_{f}^{k}(x)\leq B_{f}^{\prime k}(x)=1-\rho _{k}$\newline
Thus $G_{f}(s)$ (and hence $B_{f}(s)$) and $H_{f}(s)$ are weakly quasi
coincident at some point having different base or different support than
that of $P_{f}(s)$.\newline
(c) When $z\neq x$.\newline
We have $B_{f}^{@}(s)(z)=\{\overline{A_{f}(s)}\}^{c}(z)$. Also $\{\overline{%
A_{f}(s)}\}^{c}=\{A_{f}^{c}(s)\}^{\circ }$. Since $\{A_{f}^{c}(s)\}^{\circ
}(z)=B_{f}^{@}(s)(z)\geq C_{f}(s)(z)$,$~\exists $ an open fuzzy sequential
set $B_{f}^{\prime \prime }(s)\leq A_{f}^{c}(s)$ such that $B_{f}^{\prime
\prime k}(z)\geq C_{f}^{k}(z)(z)>1-\mu _{k}$. Therefore $G_{f}^{\prime
}(s)=B_{f}(s)\wedge B_{f}^{\prime \prime }(s)$ is also a weak Q-nbd of $%
P_{f}^{\prime }(s)$ and hence is weakly quasi coincident with $H_{f}(s)$.%
\newline
Since $B_{f}^{\prime \prime }(s)$ $\leq A_{f}^{c}(s)$\newline
$\Longrightarrow B_{f}^{\prime \prime }(s)(x)\leq 1-A_{f}(s)(x)$\newline
$\Longrightarrow B_{f}^{\prime \prime k}(x)\leq
1-A_{f}^{k}(x)=1-H_{f}^{k}(x) $.\newline
Thus $G_{f}^{\prime }(s)$ (and hence $B_{f}(s)$) is weakly quasi coincident
with $H_{f}(s)$ at some point having different base or different support
than that of $P_{f}(s)$.\newline
Subcase II. When n$\neq $k.\newline
(a) Suppose $z=x$. We have $B_{f}^{\prime }(s)(x)=1-\rho ^{\prime }$

$\Longrightarrow B_{f}^{\prime n}(x)=1>1-\mu _{n}$\newline
So $B_{f}^{\prime }(s)$ is a weak Q-nbd of $P_{f}^{\prime }(s)$. Hence $%
G_{f}(s)=C_{f}(s)\wedge B_{f}^{\prime }(s)$ is a weak Q-nbd of $%
P_{f}^{\prime }(s)$ and so it is weakly quasi coincident with $H_{f}(s).$%
\newline
Now $G_{f}(s)(x)\leq B_{f}^{\prime }(s)(x)=1-\rho ^{\prime }$.\newline
$\Longrightarrow $ $G_{f}^{k}(x)\leq B_{f}^{\prime k}(x)=1-\rho
_{k}=1-H_{f}^{k}(x)$.\newline
So $H_{f}(s)$ and $G_{f}(s)$ are weakly quasi coincident at some point
having different base or different support than that of $P_{f}(s)$.\newline
(b) When $z\neq x$, the proof is same as Subcase I (c).
\end{proof}

\section{Acknowledgement:}

The second Author is thankful to the Council of Scientific and Industrial
Research (CSIR), New Delhi, India, for the financial assistance awarded to
her through the NET-JRF program.\newpage

\section{References}

[1] L.A. Zadeh, "Fuzzy Sets", Information And Control 8, 338-353(1965).%
\newline
[2] M.K. Bose and Indrajit Lahiri, "Sequential Topological Spaces and
Separation Axioms", Bulletin of The Allahabad Mathematical Society,
17(2002), 23-37.\newline
[3] N. Palaniappan, "Fuzzy Topology", Narosa Publishing House, New Delhi
(2006).\newline
[4] N. Tamang, M. Singha and S. De Sarkar, "Separation Axioms in Sequential
Topological Spaces in the Light of Reduced and Augmented Bases", Int. J.
Contemp. Math. Sciences, Vol. 6, 2011, no.23, 1137-1150.

\end{document}